\documentclass{article}

\usepackage{arxiv}
\usepackage[utf8]{inputenc} 
\usepackage[T1]{fontenc}    
\usepackage{url}            
\usepackage{booktabs}       
\usepackage{amsfonts}       

\usepackage{nicefrac}       
\usepackage{microtype}      
\usepackage{bm}
\usepackage{amsmath}
\usepackage{amssymb}
\usepackage{amsthm}
\usepackage{xcolor}
\usepackage{mathtools}
\usepackage{relsize}
\usepackage{xspace}
\usepackage{subfig}
\usepackage{url}
\usepackage[algo2e,ruled,vlined,linesnumbered]{algorithm2e}
\SetKwComment{Comment}{$\triangleright$\ }{}

\usepackage[english]{babel}
\usepackage[utf8]{inputenc}
\usepackage{algorithm}
\usepackage{graphicx}
\usepackage{tikz}

\usepackage[colorlinks=true, allcolors=blue]{hyperref}

\DeclareMathSymbol{\shortminus}{\mathbin}{AMSa}{"39}

\newcommand{\npoints}{n}  
\newcommand{\BF}[1]{
	\relax
	\ifmmode
	\ifcat\noexpand#1\relax 
		\boldsymbol{#1}     
	\else
		\mathbf{#1}
	\fi
	\else
		\textbf{#1}
	\fi
}

\DeclareRobustCommand\iff{\;\Longleftrightarrow\;}
\DeclareMathOperator{\sign}{sgn}
\DeclareMathOperator{\Hom}{Hom} 


\newcommand{\Dom}[2][m]{\ensuremath{\operatorname{Dom}_{#1}(#2)}}
\newcommand{\HV}{\ensuremath{\operatorname{HV}}\xspace}
\newcommand{\HVY}{\ensuremath{\mathcal{H}}\xspace}
\newcommand{\HVF}{\ensuremath{\mathcal{H}_{\BF{F}}}\xspace}
\newcommand{\HVC}{\ensuremath{\operatorname{HVC}}\xspace}

\newcommand{\domain}{\ensuremath{\mathcal{X}}\xspace} 
\newcommand{\R}{\ensuremath{\mathbb{R}}\xspace} 
\newcommand{\X}{\ensuremath{\mathbf{X}}\xspace} 
\newcommand{\f}{\ensuremath{\mathbf{f}}\xspace} 
\newcommand{\F}{\ensuremath{\mathbf{F}}\xspace} 
\newcommand{\Y}{\ensuremath{\mathbf{Y}}\xspace} 
\newcommand{\proj}{\ensuremath{\operatorname{proj}}\xspace}
\renewcommand{\r}{\ensuremath{\mathbf{r}}\xspace}


\newtheorem{theorem}{Theorem}

\newtheorem{corollary}{Corollary}[theorem]
\newtheorem{lemma}{Lemma}

\title{The Hypervolume Indicator Hessian Matrix: Analytical Expression, Computational Time Complexity, and Sparsity}
\author{Andr\'e H. Deutz\\Leiden Institute of Advanced Computer Science\\Leiden University\\2333CA Leiden\\The Netherlands \And Michael T.M. Emmerich\\Leiden Institute of Advanced Computer Science\\Leiden University\\2333CA Leiden\\The Netherlands \And Hao Wang\\Leiden Institute of Advanced Computer Science\\and applied Quantum algorithms (aQa) \\Leiden University\\2333CA Leiden\\The Netherlands}

\begin{document}
\maketitle

\begin{abstract}
The problem of approximating the Pareto front of a multiobjective optimization problem can be reformulated as the problem of finding a set that maximizes the hypervolume indicator. 
This paper establishes the analytical expression of the Hessian matrix of the mapping from a (fixed size) collection of $n$ points in the $d$-dimensional decision space (or $m$ dimensional objective space) to the scalar hypervolume indicator value. To define the Hessian matrix, the input set is vectorized, and the matrix is derived by analytical differentiation of the mapping from a vectorized set to the hypervolume indicator.
The Hessian matrix plays a crucial role in second-order methods, such as the Newton-Raphson optimization method, and it can be used for the verification of local optimal sets. So far, the full analytical expression was only established and analyzed for the relatively simple bi-objective case. This paper will derive the full expression for arbitrary dimensions ($m\geq2$ objective functions). For the practically important three-dimensional case, we also provide an asymptotically efficient algorithm with time complexity in $O(n\log n)$ for the exact computation of the Hessian Matrix' non-zero entries. We establish a sharp bound of $12m-6$ for the number of non-zero entries. Also, for the general $m$-dimensional case, a compact recursive analytical expression is established, and its algorithmic implementation is discussed. Also, for the general case, some sparsity results can be established; these results are implied by the recursive expression.
To validate and illustrate the analytically derived algorithms and results, we provide a few numerical examples using Python and Mathematica implementations. Open-source implementations of the algorithms and testing data are made available as a supplement to this paper. 
\end{abstract}

\section{Introduction}\label{sec:intro}
\begin{table}[t]
\centering
\begin{tabular}{|l|l|l|}
\hline
Symbol & Domain or Signature & Description\\
\hline
 $m$& $\mathbb{N}$  & number of objective functions   \\
 $d$ & $\mathbb{N}$  & number of decision variables \\
 $\npoints$ & $\mathbb{N}$  & number of points in the approximation set   \\
 $\mathbf{f} = (f_1,\dots, f_m)$ & $\mathbb{R}^d  \rightarrow \mathbb{R}^m$ & vector-valued objective function  \\
 $\mathbf{X} = ({\BF{x}^{(1)}}^\top,\ldots, {\BF{x}^{(\npoints)}}^\top)^\top$  & $\mathbb{R}^{\npoints  d}$  &  concatenation of $\npoints$ points in the decision space\\
 $\mathbf{Y} = ({\BF{y}^{(1)}}^\top,\ldots, {\BF{y}^{(\npoints)}}^\top)^\top$  &  $\R^{\npoints m}$ & concatenation of $\npoints$ points in the objective space\\
 $\lambda_m$ & $\mathcal{B}(\mathbb{R}^{\npoints})\rightarrow \R_{\geq 0}$ & $m$-dimensional Lebesgue measure \\
 $\HV$ & $\mathcal{B}(\mathbb{R}^{\npoints}) \rightarrow \mathbb{R}_{\geq 0}$ & HVI for subsets of the objective space\\
 \HVY & $\R^{\npoints m} \rightarrow \R_{\geq 0}$ & HVI supported on the product of $\npoints$ objective spaces\\
 \HVF & $\R^{\npoints d} \rightarrow \R_{\geq 0}$ & HVI supported on the product of $\npoints$ decision spaces\\
\hline
\end{tabular}
\caption{Basic notation used throughout the paper. HVI stands for ``Hypervolume Indicator''. $\mathcal{B}(\mathbb{R}^{\npoints})$ denotes the Borel sets on $\R^n$.}
\end{table}

In this paper, we delve into continuous $m$-dimensional multi-objective optimization problems (MOPs), where multiple objective functions, e.g., $\f=(f_1, \ldots, f_m): \domain\subseteq \R^d \rightarrow \R^m$ are subject to minimization. Also, we assume $\f$ is at least twice continuously differentiable. When solving such problems, it is a common strategy to approximate the Pareto front for $m$-objective functions mapping from a continuous decision space $\R^d$ to the $\R$ (or as a vector-valued function from $\R^d$ to $\R^m$. MOPs can be accomplished by means of a finite set of points that distributes across the at most $m-1$-dimensional manifold of the Pareto front. The hypervolume indicator of a set of points is the $m$ dimensional Lebesgue measure of the space that is jointly dominated by a set of objective function vectors in $\R^m$ and bound from above by a reference point $\r\in \R^m$.
More precisely, for minimization problems, the hypervolume indicator (HV)~\cite{ZitzlerT98,ZitzlerTLFF03} is defined as the Lebesgue measure of the compact set dominated by a Pareto approximation set $Y\subset \R^m$ and cut from above by a reference point $\BF{r}$:
$$
 \operatorname{HV}(Y;\BF{r}) = \lambda_m\left(\left\{\BF{p}\colon \exists\BF{y}\in Y(\BF{y}\prec\BF{p} \wedge \BF{p}\prec \BF{r})\right\}\right),
$$
where $\lambda_m$ denotes the Lebesgue measure on measurable space $(\R^m, B(\R^m))$ with $B(\R^m)$ being the Borel sets of $\R^m$. We will omit the reference point for simplicity henceforth. HV is Pareto compliant, i.e., for all $Y \prec Y'$, $\HV(Y) > \HV(Y')$, and is extensively used to assess the quality of approximation sets to the Pareto front, e.g., in SMS-EMOA~\cite{BeumeNE07} and multiobjective Bayesian optimization~\cite{EmmerichYD0F16}. Being a set function, it is cumbersome to define the derivative of HV\footnote{The derivative of a set function is not defined for an arbitrary family of sets. For some special cases, it can be defined directly, e.g., on Jordan-measurable sets~\cite{dibenedetto2002real}.}.
Therefore, we follow the generic set-based approach for MOPs~\cite{EmmerichD12}, which considers a finite set of objective points (of size $\npoints$) as a single point in $\R^{\npoints m}$, obtained via the following concatenation map (and its inverse):
\begin{align*}
    \operatorname{concat}&\colon (\R^m)^\npoints \rightarrow \R^{\npoints m}, \quad Y \mapsto \left(y_1^{(1)}, \ldots, y_m^{(1)}, \ldots, y_1^{(\npoints)}, \ldots, y_m^{(\npoints)}\right)^\top, \\
    \operatorname{concat}^{-1}&\colon \R^{\npoints m} \rightarrow (\R^m)^\npoints, \quad  \BF{Y} \mapsto \left\{(Y_1, \ldots, Y_m)^\top, (Y_{m+1}, \ldots, Y_{2m})^\top, \ldots, (Y_{(\npoints-1)m + 1}, \ldots, Y_{\npoints m})^\top\right\}.
\end{align*}
The concatenation map gives rise to a hypervolume function that takes vectors in $\R^{\npoints m}$ as input:
\begin{equation}
\HVY\colon \R^{\npoints m}\rightarrow \R_{\geq 0}, \quad \Y \mapsto \left[\operatorname{HV} \circ \operatorname{concat}^{-1}\right](\Y), \label{eq:hvy}
\end{equation}
Similarly, we also consider a finite set of decision points (of size $\npoints$) as a single point in $\R^{\npoints d}$, i.e., $\X = [{\BF{x}^{(1)}}^\top, {\BF{x}^{(2)}}^\top, \ldots, {\BF{x}^{(\npoints)}}^\top]^\top \in \R^{\npoints d}$. 
In this sense, the objective function \f is also extended to:
$$
\F\colon \domain^\npoints \rightarrow \mathbb{R}^{\npoints m}, \X \mapsto [\f(X_1, \ldots, X_{d}), \f(X_{d+1}, \ldots, X_{2d}), \ldots, \f(X_{(\npoints - 1)d+1}, \ldots, X_{\npoints d})]^\top.
$$
Given \F, we have the relation $\Y =\F(\X)$. Now, we can express hypervolume indicator as a function supported on $\R^{\npoints d}$: 
\begin{equation}
    \HVF\colon \mathbb{R}^{\npoints d} \rightarrow \R_{\geq 0},\quad \X \mapsto \left[\HV \circ \operatorname{concat}^{-1} \circ\F\right](\X). \label{eq:hvf}
\end{equation}
Notably, assuming $\f$ is twice differentiable, the above hypervolume functions $\HVY$ and \HVF are twice differentiable almost everywhere in their domains, respectively. 

It is straightforward to express the gradient of \HVF w.r.t.~\X using the chain rule as reported in our previous works~\cite{EmmerichD12,WangDBE17}: $\nabla \HVF(\X) = (\partial \HVY/\partial \Y)( \partial \F(\X)/\partial\X)$, in which we also discussed the time complexity of computing the hypervolume gradient.

In this work, we investigate the Hypervolume Indicator Hessian Matrix for more than two objectives and propose an algorithm to compute it efficiently. The work is structured as follows: In Section~\ref{sec:chain}, we briefly review the general construction of the Hypervolume Gradient and Hypervolume Hessian $\HVF$ via the chain-rule as it has been outlined previously in~\cite{EmmerichD12} and, respectively, in~\cite{Sosa-HernandezS20}. Section~\ref{sec:3d} provides a discussion of the 3-D Hypervolume indicator Hessian matrix $\HV$, including a $O(n \log n)$ dimension sweep algorithm for its asymptotically optimal computation and an analysis of its sparsity, i.e., the number of its non-zero components. Furthermore, it is argued that in the 3-D case, the Hypervolume Hessian Matrix is sparse and has at most $O(\npoints)$ non-zero components.
In Section~\ref{sec:general-computation}, we discuss the analytical formulations of the hypervolume Hessian for the general case of $m>1$ objective functions. The result reduces the computation of the Hessian of $m$ objective functions to the repeated computation of the hypervolume indicator gradient for collections of vectors in $\mathbb{R}^{m-1}$. A \texttt{Python} implementation is provided for the general cases.
In Section~\ref{sec:numerical}, we provide numerical examples. In Section~\ref{sec:disuss}, we finish the paper with a discussion of some basic properties of the hypervolume Hessian matrix, such as its continuity, one-sidedness, and rank, and point out interesting open questions for its further analysis.

\section{General Construction of Hypervolume Hessian and Gradient via the Chain Rule}
\label{sec:chain}
Taking the $\R^{\npoints d}$-vector \X and $\R^{\npoints m}$-vector $\Y=\F(\X)$, we express the Hessian matrix of the hypervolume indicator as follows:
\begin{align}
    \nabla^2\HVF &= \frac{\partial}{\partial \X}\left(\frac{\partial \HVY}{\partial \Y} \frac{\partial \Y}{\partial \X}\right)
    =\left[\frac{\partial}{\partial \X}\left(\frac{\partial \HVY}{\partial \Y}\right)\right]^\top\frac{\partial \F(\X)}{\partial \X} + \frac{\partial \HVY}{\partial \Y}\frac{\partial^2 \F(\X)}{\partial \X\partial\X^\top}\nonumber\\
    &=\nabla{\F(\X)}^\top \frac{\partial^2 \HVY}{\partial \Y\partial\Y^\top}\nabla{\F(\X)} + \frac{\partial \HVY}{\partial \Y}\frac{\partial^2 \F(\X)}{\partial \X\partial\X^\top}.\label{eq:HV-Hessian}
\end{align}
The Hessian of vector-valued objective function $\F$, i.e., $\partial^2 \F /\partial \X\partial\X^\top \colon \R^{\npoints d} \rightarrow \Hom(\R^{\npoints d}, \Hom(\R^{\npoints d}, \R^{\npoints m}))$, is a tensor of $(1, 2)$ type.
Let $T^k_{i,j}=\partial^2 F_k /\partial \mathrm{X}_i\partial \mathrm{X}_j, i, j\in[1..\npoints d], k\in[1..\npoints m]$, we specify the entries of $T$ as follows:
\begin{align*}
    &T^{k}_{i,j} = 
\begin{cases}
\partial^2 f_\beta(\BF{x}^{(\alpha)})/\partial x^{(\alpha)}_{i'}\partial x^{(\alpha)}_{j'}, &\text{if } (\alpha-1)d + 1 \leq i, j \leq \alpha d, \\
0,  &\text{otherwise.}
\end{cases}\\
&\alpha=\lceil k / m \rceil, \beta=k - (\alpha - 1)m, i' = i - (\alpha - 1)d, j' = j - (\alpha - 1) d.
\end{align*}
Since the above map from $k$ to $(\alpha, \beta)$ is bijective (its inverse is $k=\alpha\beta$), we will equivalently use $\alpha\beta$ for the contravariant index $k$. It is obvious that tensor $T$ is sparse, where for each $k$, only $d^2$ entries are nonzero, giving up to $\npoints m d^2$ nonzero entries in total.
Using the Einstein summation convention, we can expand the second term in Eq.~\eqref{eq:HV-Hessian} as:
\begin{equation}
   \left(\frac{\partial \HVY}{\partial \Y}\frac{\partial^2 \F(\X)}{\partial \X\partial\X^\top}\right)_{i, j} = \left(\frac{\partial \HVY}{\partial \Y}\right)_k T^{k}_{i, j} = \frac{\partial \HVY}{\partial f_\beta(\BF{x}^{(\alpha)})}T^{\alpha\beta}_{i,j}, \label{eq:term2}
\end{equation}
where $(\cdot)_{i,j}$ denotes the $(i,j)$-entry of a tensor. We have discussed the analytical expression of the term $\partial \HVY/\partial \Y$ in our previous works~\cite{EmmerichD12,WangDBE17}.
Notably, the above expression leads to a \emph{block-diagonal matrix} containing $\npoints$ matrices of shape $d\times d$ on its diagonal. Therefore, we observe a high sparsity of the second term in Eq.~\eqref{eq:HV-Hessian}. As for the first term, $\partial^2 \HVY /\partial \Y\partial\Y^\top$ denotes the Hessian of the hypervolume indicator w.r.t.~objective vectors, whose computation and sparsity will be discussed in the following sections.\\
In our previous work~\cite{Sosa-HernandezS20}, we have derived the analytical expression of $\nabla^2 \HVF$ for bi-objective cases and analyzed the structure and properties of the hypervolume Hessian matrix. Also, we implemented a standalone Hypervolume Newton (HVN) algorithm for unconstrained MOPs. Moreover, we have shown that the Hessian $\nabla^2\HVF$ is a tri-diagonal block matrix in bi-objective cases and provided the non-singularity condition thereof, which states the Hessian is only singular on a null subset of $\R^{\npoints d}$~\cite{Sosa-HernandezS20}, thereby ascertaining the safety of utilizing hypervolume Hessian matrix, e.g., in the hypervolume Newton method~\cite{WangEDHS22} for equality constraints.

\section{Hypervolume Indicator Hessian Matrix in 3-D}
\label{sec:3d}
As with many problems related to Pareto dominance, the 2-D and 3-D cases have a special structure that can be exploited for formulating asymptotically efficient dimension sweep algorithms~\cite{KungLP75,paquete2022computing}. Next, the dimension sweep technique will be applied to yield an asymptotically efficient algorithm for the problem of computing the Hessian Matrix of the 3-D Hypervolume Indicator $\HV$. 
The basic idea is sketched in Fig.~\ref{fig:HV3Di} and consists of computing first the facets of the polyhedron that is measured by the hypervolume indicator by lowering a sweeping plane along each one of the axes. The gradient components are given by the areas of the facets (e.g., the yellow shaded area in Fig.~\ref{fig:HV3Di}, and the \emph{length of the line segments of the ortho-convex polygon that surrounds this facet determines the components of the Hessian matrix of $\HV$}. This can be easily verified by studying geometrically the effect of small perturbations of the coordinates of points in $\mathbf{Y}$ along the coordinate axis on the value of the hypervolume indicator (gradient components) $\HV$.
\begin{figure}[t]
\centering
\includegraphics[width=.75\textwidth, trim=0mm 0mm 0mm 2mm, clip]{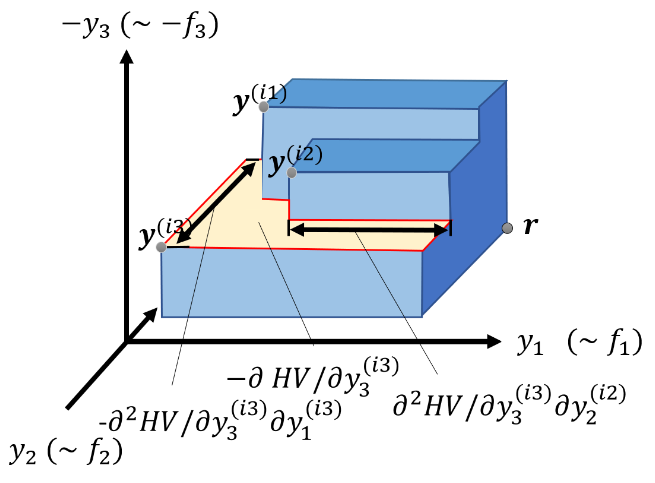}
\captionof{figure}{\label{fig:HV3Di}Visualization of some first and second-order derivatives of a 3-D Hypervolume approximation set $(\BF{y}^{(1)},\BF{y}^{(2)}, \BF{y}^{(3)} )$ with $i1$, $i2$, $i3$ chosen such that $y^{(i1)}_3$ $>$ $y^{(i2)}_3$ $>$ $y^{(i3)}_3$.}
\end{figure}
\begin{figure}[t]
\centering
\begin{tikzpicture}[scale=.95]
\tikzset{help lines/.style={color=blue!50}}
\draw[thick,step=1cm,help lines] (0,0) grid (9,9);
\draw[ultra thin,step=.5cm,help lines] (0,0) grid (9,9);
\draw[ultra thick,-latex] (0,0) -- (10,0);
\draw[ultra thick,-latex] (0,0) -- (0,10);
\foreach \x in {0,1,...,9} {     
\draw [thick] (\x,0) -- (\x,-0.2);
}
\foreach \y in {0,1,...,9} {   
\draw [thick] (0,\y) -- (-0.2,\y);
}
\foreach \x in {0,1,...,8} { \node [anchor=north] at (\x,-0.3) {\x}; }
\foreach \y in {0,1,...,8} { \node [anchor=east] at (-0.3,\y) {\y}; }
\foreach \x in {.5,1.5,...,8.5} {
\draw [thin] (\x,0) -- (\x,-0.1);
}
\foreach \y in {.5,1.5,...,8.5} {
\draw [thin] (0,\y) -- (-0.1,\y);
}
\filldraw
(-1,9) circle (3pt) node[anchor=south west] {$\mathbf{y}^{(0)}$} 
(2,6) circle (3pt) node[anchor=north east, xshift=8mm] {$\mathbf{y}^{(i5)}=\mathbf{d}_t^{(0)}$}
(3,2) circle (3pt) node[anchor=north east] {$\mathbf{y}^{(i7)}$}
(7,1) circle (3pt) node[align=left,   below] {$\mathbf{y}^{(i4)}=\mathbf{d}_t^{(N_{t}+1)}$}
(4,5) circle (3pt) node[anchor=south west] {$\mathbf{y}^{(i3)}=\mathbf{d}_t^{(1)}$}
(5,3) circle (3pt) node[align=left,   below] {$\mathbf{y}^{(i2)}=\mathbf{d}_t^{(2)}$}
(6,4) circle (3pt) node[anchor=south west] {$\mathbf{y}^{(i1)}$}
(3,6) circle (3pt) node[anchor=south west] {$\mathbf{t}^{(2)}$}
(7,2) circle (3pt) node[anchor=south west] {$\mathbf{t}^{(1)}$}
(9,9) circle (3pt) node[anchor=south west] {$\mathbf{r}$}
(9,-1) circle (3pt) node[anchor=south west] {$\mathbf{y}^{(n+1)}$}
;
\draw [ultra thick,-latex, draw=black,fill=yellow,fill opacity=0.2] (3,2) -- (9,2) -- (9,9) -- (3,9) -- cycle;
\draw [ultra thick,-latex, draw=black,fill=blue,fill opacity=0.2] (9,9) -- (3,9) -- (3,6) -- (4,6) -- (4,5) -- (5,5) -- (5,3) -- (7,3) -- (7,2) -- (9,2) -- cycle;
\draw [ultra thick, dotted, draw=black] (9,-1) -- (9,0);
\draw [ultra thick, dashed, draw=black] (9,0) -- (9,2);
\draw [ultra thick, dotted, draw=black] (-1,9) -- (0,9);
\draw [ultra thick, dashed, draw=black] (0,9) -- (3,9);
\draw [ultra thick, dashed, draw=black] (3,6) -- (2,6) -- (2,9);
\draw [ultra thick, dashed, draw=black] (0,9) -- (3,9);
\draw [ultra thick, dashed, draw=black] (7,2) -- (7,1) -- (9,1);
\end{tikzpicture}
\captionof{figure}{\label{fig:HVDimSweepIteration}Snapshot of the components of the polygon (yellow shaded), in which the length of each edge constitutes the non-zero components of the Hessian matrix of $\HV$ in a single iteration (lowering of sweeping plane) $t$  of Alg.~\ref{alg:Hessian-3D}.}
\end{figure}
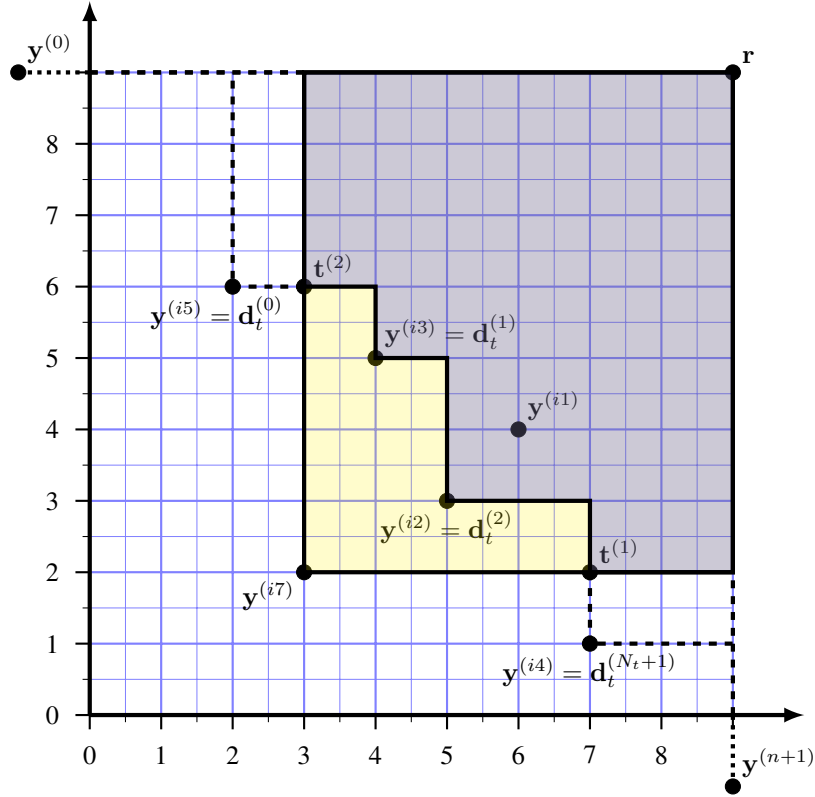

Without loss of generality, we first compute the derivatives with respect to $y_3$. By permuting the roles of $y_1$, $y_2$, and $y_3$, we can get all derivatives. 
In the context of the dimension sweep algorithm, we assume that points in Fig.~\ref{fig:HV3Di} are sorted by the 3rd coordinate $y_3$, that is, $Y$ is represented in such a way that $y^{(i3)}_3 < y^{(i2)}_3 < y^{(i1)}_3$. We assume that the points in $\mathbf{Y}$ are in general position (otherwise, one-sided derivatives can occur, which will be discussed later).

Alg.~\ref{alg:Hessian-3D} computes all positive entries of the Hessian matrix of $\HV$ at a point $\mathrm{Y}$.
The algorithm proceeds in three sweeps. The sweep coordinate has index $h$ (like height), and the other two coordinates are termed $l$ and $w$ (like length and width). The first sweep sets $h=3$, the second sweep $h=2$, and the third sweep $h=1$. The values of $l$ and $w$ are set to the remaining two coordinates. The roles of $l$ and $w$ are interchangeable, but here we set them to $l=1$, $w=2$ in the first sweep, to $l=1, w=3$ in the second sweep, and to $l=3, w=2$ in the third sweep.
Next, we describe a single sweep in detail. Without loss of generality, let us choose the sweep along the 3-rd coordinate, e.g., $l=1, w=2, h=3$:
First, we introduce sentinels $\BF{y}^{(0)}$ and $\BF{y}^{(n+1)}$ that make sure that every point always has a left and a lower neighbor.
We use a balanced binary search tree $\mathrm{T}$ to efficiently maintain a list of all non-dominated points in the $lw$-plane among the points that have been visited in a single sweep so far.
The sorted list represented by tree $\mathrm{T}$ is initialized by the sentinels; note that the sentinels cannot become dominated in the $lw$-plane because one of their coordinates is $-\infty$. The other coordinates are from the reference point. 
Next, start the loop that starts from the highest $y_h$ coordinate and lowers the sweeping plane to the next highest $y_h$ coordinate in each iteration $t$. The value of $t$ is thus the index of the point that is currently processed, and points are sorted in the $h$-direction using the index transformation $a[t], t = 1,\ldots, n$.
In each iteration, we determine from the sorted list the sublist starting from $\BF{y}^{d[t][0]}$ and terminating with $\BF{y}^{d[t][N_t]}$. We assume the list is sorted ascendingly in the $l$-coordinate. The point is $\mathrm{T}$with the highest $y_l$ coordinate that does not exceed $y^{a[t]}_l$ is chosen as $\bf{t}^{d[t][0]}$ and the point with the highest $y_w$ coordinate that does not yet exceed $y^{(a[t])}_w$ is chosen as $\bf{t}^{(d[t][N_t+1])}$. These two points always exist because of the sentinels we set initially. The points between these points in the list, given there exist such points, are referred to by $\BF{y}^{(d[t][1])}$, $\dots$, $\BF{y}^{(d[t][N_t])}$. If no such points exist, then $N_t$ is set to $0$.
Note, that the points $\BF{y}^{(d[t][1])}, \dots, \BF{y}^{(d[t][N_t])}$ are points that become dominated by $\BF{y}^{(a[t])}$ in the $lw$-projection. They will be discarded from $\mathrm{T}$ at the end of the iteration, and $\BF{y}^{(a[t])}$ will be inserted to $\mathrm{T}$ thereafter so that the list represented by $\mathrm{T}$ remains a list of mutually non-dominated points in the $lw$-projection.
Before discarding the points from $\mathrm{T}$, the new positive components of the Hypervolume Hessian are computed. This is done by computing the line segments of the polygonal area that is marked by the points $\BF{y}^{(a[t])}$ and $\BF{y}^{(d[t][0])}$, $\dots$, $\BF{y}^{(d[t][N_t+1])}$ as it is indicated graphically in Fig.~\ref{fig:HVDimSweepIteration} for a single iteration of the algorithm. This is the polygonal region that marks the area of the hypervolume gradient $\partial\HVY / y^{(a[t)]}_h$. Changing the coordinates of the corners of this polygon infinitesimally adds a differential change to the area, which is the aforementioned hypervolume indicator gradient component. The details of the assignment to the Hypervolume Hessian can be determined by computing the side-lengths of the edges of the polygon and carefully tracing which coordinates of points in $\BF{Y}$ determine the coordinates of the region the area of which determines the gradient component (the yellow area in Fig.~\ref{fig:HV3Di} and in Fig.~\ref{fig:HVDimSweepIteration}).   

\begin{theorem}{Computation of Hessian Matrix components by \textsc{HV3D-TriSweep} algorithm}
\label{thm:3Dtrisweepsoundness}
Assume that $n$ mutually non-dominated points in $\mathbb{R}^3$ are given by a collection $\BF{Y}$, and assume they are in general position (no duplicate coordinates). Furthermore, assume that all points in $\BF{Y}$ dominate the reference point $\mathbf{r}$.
Then algorithm \ref{alg:Hessian-3D} that we will term \textsc{HV3D-TriSweep} computes all non-zero components of the Hessian matrix $\HV$.
\end{theorem}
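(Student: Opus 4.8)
The plan is to prove correctness of \textsc{HV3D-TriSweep} by establishing two things: first, a \emph{geometric characterization} of which second-order partial derivatives of $\HV$ are non-zero, and second, that the algorithm enumerates exactly those derivatives (with the correct values) across its three sweeps. By the permutation symmetry argument already set up in the text, it suffices to analyze a single sweep, say $h=3$, and to show that this sweep correctly produces all non-zero Hessian entries of the form $\partial^2 \HV / \partial y^{(a)}_3 \partial y^{(b)}_j$ in which differentiation is taken first with respect to a height coordinate; the three sweeps together then cover all coordinate pairs. So the first step I would carry out is to fix the sweep and reduce the global claim to a per-sweep claim.

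The second step is the geometric core. For a fixed point $\BF{y}^{(a)}$, the gradient component $\partial \HV / \partial y^{(a)}_3$ equals the area of the facet at height $y^{(a)}_3$ that is dominated exclusively by $\BF{y}^{(a)}$ among the points lying above the sweeping plane --- this is the yellow region in Fig.~\ref{fig:HV3Di}, an axis-parallel (ortho-convex) polygon in the $lw$-plane. I would then differentiate this area a second time: perturbing a coordinate $y^{(b)}_l$ or $y^{(b)}_w$ of a neighboring point $\BF{y}^{(b)}$ moves one edge of this polygon, changing the area linearly, and the derivative of the area equals the signed length of the affected edge. The key observation to prove is that the boundary edges of this polygon are determined precisely by the staircase of points $\BF{y}^{(d[t][0])}, \ldots, \BF{y}^{(d[t][N_t+1])}$ that $\BF{y}^{(a[t])}$ dominates in the $lw$-projection together with its two bracketing neighbors. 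Hence the non-zero mixed second derivatives $\partial^2\HV / \partial y^{(a)}_3 \partial y^{(b)}_j$ are exactly those where $\BF{y}^{(b)}$ contributes an edge to $\BF{y}^{(a)}$'s facet boundary, and the value equals the corresponding edge length (with sign). I would verify this by the infinitesimal-perturbation analysis sketched in the paragraph preceding the theorem, treating separately the edges incident to the brackets $\BF{t}^{(d[t][0])}, \BF{t}^{(d[t][N_t+1])}$ and the interior staircase edges.

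The third step is a loop-invariant argument for the sweep itself. I would prove by induction on the iteration counter $t$ that, immediately before processing $\BF{y}^{(a[t])}$, the tree $\mathrm{T}$ stores exactly the set of points already visited (i.e.\ with larger $y_3$) that are mutually non-dominated in the $lw$-projection, sorted by $y_l$. The sentinels $\BF{y}^{(0)}$ and $\BF{y}^{(n+1)}$ guarantee the two bracketing points always exist. Given the invariant, the sublist $\BF{y}^{(d[t][0])},\ldots,\BF{y}^{(d[t][N_t+1])}$ extracted by the algorithm coincides with the staircase from the geometric step, so the edge lengths computed are exactly the non-zero Hessian entries with first index $a[t]$ in this sweep; discarding the dominated interior points and inserting $\BF{y}^{(a[t])}$ re-establishes the invariant for $t+1$. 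Finally I would note that symmetry of the Hessian means each sweep need only output the entries in which its own sweep coordinate is differentiated first, and that the union over the three sweeps, under the stated permutation of coordinate roles, covers every coordinate pair without omission.

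The main obstacle I anticipate is the bookkeeping in the second step: correctly matching each polygon edge to the pair of point-coordinates that determine it, and getting the signs and the identification of \emph{which} coordinate ($y_l$ versus $y_w$ of which neighbor) owns each edge. The ortho-convex staircase has edges alternating in orientation, and the two end brackets behave differently from the interior edges, so a careful case analysis --- ideally organized as a correspondence between consecutive pairs in the staircase and the entries of the Hessian --- will be needed to ensure completeness (no non-zero entry is missed) and soundness (no spurious entry is produced). Establishing that the \emph{only} non-zero mixed derivatives are those arising from shared edges, i.e.\ that non-adjacent points in the projection contribute nothing, is the crux that ties the sparsity bound $12m-6$ to the geometry.
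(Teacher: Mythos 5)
Your proposal's architecture is the same as the paper's own (very terse) proof: identify $\partial \HVY/\partial y^{(a)}_h$ with the area of an axis-aligned facet, identify the second-order partials with the signed lengths of the edges of the ortho-convex polygon bounding that facet, and argue that the dimension sweep enumerates exactly these polygons. Your additions --- the loop invariant for the tree $\mathrm{T}$, the explicit soundness/completeness split, and the coverage argument over the three sweeps via Hessian symmetry --- are scaffolding the paper leaves implicit by appealing to geometric intuition and to the earlier dimension-sweep literature, and they are welcome.

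There is, however, one substantive error in your geometric core (step two), and it is the step everything else rests on. You characterize the facet of $\BF{y}^{(a)}$ as the region of the $lw$-plane dominated exclusively by $\BF{y}^{(a)}$ \emph{among the points lying above the sweeping plane} (larger $y_h$). Under the paper's minimization convention, where the region attached to $\BF{y}$ is the box $[\BF{y},\r]$, this is the wrong side: perturbing $y^{(a)}_h$ sweeps a slab that can only be shared with the boxes of points whose $h$-coordinate is \emph{smaller} than $y^{(a)}_h$, so the facet is $\Dom[2]{\proj_h(\BF{y}^{(a)})}$ minus the regions of the points \emph{below} the plane --- exactly what the paper's own Theorem~\ref{thm:partial-derivative-hv} states (the cutting set $I=[1..i-1]$ under an \emph{ascending} sort of the $k$-th coordinate). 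A two-point check makes this concrete: for $\BF{y}^{(1)}=(1,2,3)$, $\BF{y}^{(2)}=(2,1,1)$, $\r=(4,4,4)$ one computes $\partial\HVY/\partial y^{(1)}_3 = -(2-1)(4-2) = -2$, i.e.\ the facet of $\BF{y}^{(1)}$ is cut by $\BF{y}^{(2)}$, which lies \emph{below} it, whereas no point above it is relevant; your characterization would give the full rectangle of area $6$. Consequently your loop invariant, which faithfully matches the algorithm's descending sort (tree $=$ visited $=$ larger $y_h$), feeds the wrong staircase into your edge--entry correspondence: the nonzero mixed entries $\partial^2\HVY/\partial y^{(a)}_h\partial y^{(b)}_j$ pair $\BF{y}^{(a)}$ with points $\BF{y}^{(b)}$ \emph{below} it, which under a descending sweep have not yet been inserted into $\mathrm{T}$. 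Carried out honestly, the infinitesimal-perturbation analysis you propose would expose this mismatch rather than verify the claim. The repair is to require that the staircase available when $\BF{y}^{(a[t])}$ is processed consist of the points with smaller $y_h$ (an ascending sweep), after which the rest of your plan goes through. Be aware that this tension is inherited from the paper itself --- Alg.~\ref{alg:Hessian-3D} sorts descendingly while Theorem~\ref{thm:partial-derivative-hv} forces the opposite orientation --- but a correct proof must resolve the orientation, not assume it.
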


\begin{proof}
The idea of the Alg.~\ref{alg:Hessian-3D} is that it computes the polygons that mark the region, the area of which is the hypervolume derivative. It uses the same dimension sweep approach then detailed in~\cite{EmmerichD12} to compute these polygons, one polygon at each lowering of the sweep-plane.
Now, it can be easily derived by geometrical considerations on the changes in the coordinates of corner points differentially change the area of the polygon. The corner points of the polygon are coordinates of points in $\BF{Y}$, and thus the second partial derivatives will be related to these coordinates. Due to the axis aligned, orthogonal geometry of the polygon, these second derivatives are easy to establish as the length of the edges of the polygon.
\end{proof}

Next, we analyze the time complexity of Alg.~\ref{alg:Hessian-3D} and study the number of non-zero components it computes, which corresponds exactly to the non-zero components of the Hessian matrix of $\HV$.
To do so, let us first proof a lemma:
\begin{algorithm2e}[t]
\caption{HVH3D Triple Dimension Sweep algorithm to Compute all non-zero components of $\partial^2\HVY/\partial{\mathbf{Y}}\partial{\BF{Y}^\top}$}\label{alg:Hessian-3D}
\textbf{Procedure:} HVH3D-TriSweep($Y$)\;
\textbf{Input:} a collection of vectors $Y=\{\mathbf{y}^{(1)}, \dots, \mathbf{y}^{(n)}\}\subset \mathbb{R}^m$, reference point $\mathbf{r} \in \mathbb{R}^m$\;

\textbf{Output:} Non-zero components of the Hessian matrix of $\HV$: $\partial^2\HVY/\partial y^{(i)}_{m} \partial y^{(j)}_{k}$\;

\tcp{Three sweeps along the different coordinate axis.}
 
\For{$(l,w,h) \in ((1,2,3), (1,3,2), (3,2,1))$}{
    $y^{(0)}_l = -\infty, y^{(0)}_w = r_w, y^{(0)}_h = r_h$ \Comment*[r]{Define the sentinels}
    
    $y^{(n+1)}_l=r_l, y^{(n+1)}_l= -\infty, y^{(n+1)}_l = r_h$\;
    
    $(\mathbf{y}^{(a[1])},\dots,\mathbf{y}^{(a[n])}) \leftarrow$ Sort $(\mathbf{y}^{(1)}, \dots, \mathbf{y}^{(n)})$ descendingly by $y_h$ coordinate\;
    
    $\mathrm{T}=\mathrm{BSTree}((\mathbf{y}^{(0)}, \mathbf{y}^{(n+1)}))$ \Comment*[r]{Initialize balanced search tree}    
    \For{$t \in (1,\dots, n)$}     
    { 
        \Comment{$t$ is the position of  the sweeping plane along $y_h$ axis}
    
        Determine $N[t]$ and $(\mathbf{y}^{(d[t][0])}, \dots, \mathbf{y}^{(d[t][N[t]+1])})$ based on $\mathbf{y}^{(a[t])}$ as sublist of $\mathrm{T}$ starting from nearest lower neighbor to $\mathbf{y}^{(a[t])}$ in $l$ direction and terminating at nearest lower neighbor of $\mathbf{y}^{(a[t])}$ in $w$-direction\;

        $\partial^2\HVY/\partial y^{(a[t])}_{h} \partial y^{(a[t])}_{l} = - (y^{(d[t][0])}_w- y^{(a[t])}_w)$\;
      
        $\partial^2\HVY/\partial y^{(a[t])}_{h} \partial y^{(a[t])}_{w} = - (y^{(d[t][N_t])}_l- y^{(a[t])}_l)$\;
      
        \If{$N[t]>0$}{
            $\partial^2\HVY/\partial y^{(a[t])}_{h} \partial y^{(d[t][0])}_{w} = y^{(d[t][1])}_l - y^{(a[t])}_l$\;
            
            $\partial^2\HVY/\partial y^{(a[t])}_{h} \partial y^{(d[t][N_t])}_l = y^{(d[t][N_t-1])}_w - y^{(a[t])}_w$\;
            
            \For{$j=1,\ldots, N_t$}{
                $\partial^2\HVY/\partial y^{(a[t])}_{h} \partial y^{(d[t][N_t])}_l = y^{(d[t][j-1])}_w - y^{(d[t][j])}_w$\;
                
                $\partial^2\HVY/\partial y^{(a[t])}_{h} \partial y^{(d[t][N_t])}_w = y^{(d[t][j+1])}_l - y^{(d[t][j])}_l$\;
            }
            Discard $(\mathbf{y}^{(d[t][1])}, \dots,   \mathbf{y}^{(d[t][N[t]])})$ from $\mathrm{T}$\;
            
            Add $\mathbf{y}^{a[t]}$ to tree $\mathrm{T}$\;
        }
    }
}
\textbf{return} $\partial^2\HVY/\partial y^{(i)}_{m} \partial y^{(j)}_{k}$ \Comment*[r]{return only the $O(n)$ computed elements and their indices}
\end{algorithm2e}
\begin{lemma}
In algorithm \ref{alg:Hessian-3D} it holds that $\sum_{t}^n N_t = n-1$.
\label{lem:amort}
\end{lemma}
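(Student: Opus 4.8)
The plan is to prove the identity by a global amortized bookkeeping on the balanced search tree $\mathrm{T}$ over the course of a single sweep; the three sweeps are identical up to relabeling of $(l,w,h)$, so it suffices to treat one. First I would isolate two structural facts from the algorithm. (i) In iteration $t$ exactly one point, $\mathbf{y}^{(a[t])}$, is inserted into $\mathrm{T}$, while exactly $N_t$ points are discarded. (ii) The two sentinels $\mathbf{y}^{(0)}$ and $\mathbf{y}^{(n+1)}$ are never discarded, because each carries a $-\infty$ coordinate and hence can never be dominated in the $lw$-projection by any point of $\mathbf{Y}$. Combining these with the facts that a discarded point never re-enters $\mathrm{T}$ and that each of the $n$ points of $\mathbf{Y}$ is inserted exactly once, I obtain the conservation law
\begin{equation*}
\underbrace{2}_{|\mathrm{T}_{\mathrm{init}}|} \;+\; \underbrace{n}_{\text{insertions}} \;-\; \underbrace{\textstyle\sum_{t=1}^{n} N_t}_{\text{deletions}} \;=\; |\mathrm{T}_{\mathrm{final}}|,
\end{equation*}
so that $\sum_t N_t = n - \bigl(|\mathrm{T}_{\mathrm{final}}| - 2\bigr)$; that is, the sum equals $n$ minus the number of non-sentinel points remaining in the tree when the loop terminates.

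The identity $\sum_t N_t = n-1$ is therefore equivalent to the claim that exactly one point of $\mathbf{Y}$ survives in $\mathrm{T}$ at termination. One inequality is immediate and is, moreover, all that the subsequent $O(n\log n)$ complexity analysis actually requires: the point $\mathbf{y}^{(a[n])}$ processed in the last iteration is inserted and has no later iteration in which to be removed, so at least one real point survives and hence $\sum_t N_t \le n-1$. This already certifies that the total work spent locating and discarding dominated points is linear, which, combined with the $O(\log n)$ cost of each tree operation, delivers the stated $O(n\log n)$ bound.

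For the matching lower bound I would analyze the terminal contents of $\mathrm{T}$ directly. Using that the points are mutually non-dominated in $\mathbb{R}^3$ and in general position, I would first show that a point can be dominated in the $lw$-projection only by a point processed later in the sweep (otherwise the two coordinates plus the sweep coordinate would force full three-dimensional domination, contradicting the hypothesis). Consequently the survivors are precisely the $lw$-nondominated ``staircase'' enclosed by the two sentinels. The main obstacle, and the step I would scrutinize most carefully, is therefore the geometric claim that this terminal staircase is reduced to a single interior point: one must track how the sentinels and the reference point bound the projected front and verify that the removal rule (discarding the sublist strictly between the nearest lower $l$-neighbor and the nearest lower $w$-neighbor) absorbs all but one interior point as the sweep completes. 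Since a staircase can a priori retain several mutually $lw$-nondominated points, this is exactly where an additional structural argument is needed; absent it, the robust conclusion is the bound $\sum_t N_t \le n-1$, which is what the complexity statement relies on, and I would either supply the extra hypothesis forcing a unique survivor or restate the lemma as this inequality.
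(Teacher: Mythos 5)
Your bookkeeping argument is, at its core, the same amortization the paper itself uses: the paper's proof of Lemma~\ref{lem:amort} also just observes that each point of $\mathbf{Y}$ is discarded from $\mathrm{T}$ at most once, ``with the exception of the last point.'' But you have carried the accounting through honestly, and your diagnosis of the missing step is exactly right --- and it exposes a genuine error in the lemma as stated. From your conservation law, $\sum_t N_t = n - \bigl(\vert\mathrm{T}_{\mathrm{final}}\vert - 2\bigr)$, and the non-sentinel survivors in $\mathrm{T}$ at the end of a sweep are precisely the points whose $lw$-projections are non-dominated among all $n$ projections (as you note, a projection can only be dominated by the projection of a point processed later in the sweep, by mutual non-domination in $\mathbb{R}^3$). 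Nothing in the hypotheses forces this terminal staircase to shrink to a single point. Concretely, take $n=3$, $\mathbf{y}^{(1)}=(1,5,3)$, $\mathbf{y}^{(2)}=(5,1,2)$, $\mathbf{y}^{(3)}=(3,3,1)$, $\mathbf{r}=(6,6,4)$: these points are mutually non-dominated and in general position, yet their projections onto the $(y_1,y_2)$-plane are also mutually non-dominated, so in the sweep with $h=3$ no point is ever discarded and $\sum_t N_t = 0 \neq n-1$. In general one has $\sum_t N_t = n - F$, where $F\in[1..n]$ is the size of the projected non-dominated front, so only $0 \le \sum_t N_t \le n-1$ can be asserted. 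Your proposed fallback is therefore the correct resolution: the lemma should be restated as the inequality $\sum_t N_t \le n-1$ (each point discarded at most once, and the last processed point never), and this upper bound is all that the proofs of Theorem~\ref{thm:3Dtime} and Theorem~\ref{thm:3Dsparsity} actually use; the paper's own proof, read strictly, establishes nothing stronger.
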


\begin{proof}
$N_t$ is the number of dominated points in the 2-D plane spanned by the $y_l$ and $y_w$ coordinate in one iteration of the algorithm marked by iteration index $t$.
Each point in $\mathbf{Y}$ gets dominated in the 2-D projection at most once, and in this case, it contributes to $N_t$ and is removed from the tree, with the exception of the last point.
\end{proof}

\begin{theorem}{Time Complexity of 3-D Hessian Matrix of $\HV$}
\label{thm:3Dtime}
The computation of all non-zero components of the Hessian matrix of the mapping from a set $\mathbf{Y} \in \mathbb{R}^3$ in the objective space to the hypervolume indicator takes computational time $\Theta(n \log n)$. 
\end{theorem}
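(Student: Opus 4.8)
The plan is to establish the claimed $\Theta(n\log n)$ by proving a matching upper and lower bound. The upper bound $O(n\log n)$ comes from a direct running-time analysis of Algorithm~\ref{alg:Hessian-3D}, where the only non-trivial ingredient is an amortized argument supplied by Lemma~\ref{lem:amort}. The lower bound $\Omega(n\log n)$ comes from a reduction showing that any procedure producing the non-zero Hessian entries also solves a sorting problem, which is known to require $\Omega(n\log n)$ operations in the comparison (algebraic decision tree) model.

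For the upper bound, I would bound the cost of one sweep and then multiply by the constant number ($3$) of sweeps. A single sweep consists of: (i) one sort of the $n$ points by the $y_h$ coordinate, costing $\Theta(n\log n)$; (ii) a sequence of balanced-search-tree operations, namely for each iteration $t$ two neighbor queries to locate $\BF{y}^{(d[t][0])}$ and $\BF{y}^{(d[t][N_t+1])}$ plus one insertion, each costing $O(\log n)$, together with the deletions of the $N_t$ newly dominated points; and (iii) the constant-work assignments to Hessian entries, whose count in iteration $t$ is $\Theta(N_t+1)$. The subtle point is that a single iteration may touch $\Theta(n)$ points, so a naive per-iteration worst case would only give $O(n^2)$. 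This is resolved by Lemma~\ref{lem:amort}: since $\sum_{t=1}^n N_t = n-1$, the total number of deletions and the total amount of inner-loop work over the whole sweep are $O(n)$. Charging $O(\log n)$ to each of the $O(n)$ deletions, the $n$ insertions, and the $2n$ neighbor queries gives $O(n\log n)$, while the constant-work assignments sum to $O(n)$. Adding (i) yields $O(n\log n)$ per sweep and hence overall.

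For the lower bound I would reduce sorting to the Hessian-computation problem; note first that the algorithm itself is already $\Omega(n\log n)$ because step (i) performs a comparison sort, but the reduction gives the stronger statement for the underlying problem. Given $n$ distinct reals $a_1,\dots,a_n$, I would build in $O(n)$ time a collection $\BF{Y}$ of mutually non-dominated points in general position, all dominating a fixed reference point — for instance $\BF{y}^{(i)}=(a_i,-a_i,i)$ with a suitable $\BF{r}$ — so that in the sweep along the third axis the second-derivative entries associated with $\BF{y}^{(i)}$ expose, through their indices and values, the predecessor and successor of $a_i$ among the already-processed values. Reading these adjacencies off the output reconstructs the total order of $a_1,\dots,a_n$ with only linear comparison-model overhead, so a hypothetical $o(n\log n)$ algorithm for the Hessian entries would sort in $o(n\log n)$, a contradiction. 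Combined with the upper bound this gives $\Theta(n\log n)$.

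The step I expect to be the main obstacle is the lower bound, specifically exhibiting a point configuration that is simultaneously mutually non-dominated, in general position, and above the reference point, while guaranteeing that the emitted non-zero entries (indices together with values) determine the sorted permutation uniquely, and verifying that the reconstruction stays within the comparison model so that the $\Omega(n\log n)$ sorting bound genuinely transfers. The upper bound, by contrast, is routine bookkeeping once Lemma~\ref{lem:amort} is invoked to amortize the per-iteration cost.
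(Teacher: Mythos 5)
Your upper bound is, in substance, the paper's own argument: the paper gives no separate proof for this theorem, but folds the time analysis into Lemma~\ref{lem:amort} and the proof of Theorem~\ref{thm:3Dsparsity} --- per sweep, one sort at $\Theta(n\log n)$; a constant number of neighbor queries and insertions per iteration plus one deletion per point ever dominated, each at $O(\log n)$, hence $O(n\log n)$ tree work in total once $\sum_t N_t = n-1$ is invoked; and constant-work entry assignments summing to $O(n)$; times three sweeps. Moreover, the $\Theta$ in the statement is meant as the worst-case running time of Alg.~\ref{alg:Hessian-3D} itself, for which the matching $\Omega(n\log n)$ is exactly your parenthetical remark that the algorithm always performs comparison sorts; the paper never claims, and never proves, a lower bound on the underlying problem. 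So on the claim the paper actually makes, your proposal is complete and identical in approach.

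The problem-level lower bound you add is where the genuine gap lies, and it sits precisely at the step you flagged. With $\mathbf{y}^{(i)}=(a_i,-a_i,i)$, the $(y_1,y_2)$-projections $(a_i,-a_i)$ are pairwise non-dominated (domination of one by another would force $a_i=a_j$), so in the sweep along the third axis $N_t\equiv 0$: no point ever becomes dominated in the projection, and the algorithm emits only the two entries $\partial^2\HV/\partial y^{(i)}_3\partial y^{(i)}_1$ and $\partial^2\HV/\partial y^{(i)}_3\partial y^{(i)}_2$ per point, whose index pairs involve the single index $i$. The predecessor/successor of $a_i$ among the already-swept values therefore appears in the output only as a numerical value (an edge length such as $a_{\mathrm{pred}}-a_i$), never as an index, contrary to your claim that the entries expose neighbors ``through their indices and values.'' Reconstructing the sorted order then requires, for each $i$, locating the element whose value equals that number --- a value-to-node lookup that cannot be done in $O(1)$ in the comparison/algebraic decision-tree model in which the $\Omega(n\log n)$ sorting bound holds: hashing reals is outside the model, and searching costs $\Omega(\log n)$ per lookup, which inflates the reduction overhead to $\Theta(n\log n)$ and makes the intended contradiction vacuous. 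This is the same reason sorting reduces to convex hull only when the hull is returned as an adjacency-ordered list rather than as a bare set of vertices. To repair it you would need a configuration that forces cross-index entries ($N_t>0$ events, which for your instance do occur in the other two sweeps, but you do not analyze those) chaining the points in sorted order, or a direct Ben-Or-style component-counting argument. Fortunately, none of this is needed to establish the theorem as the paper intends it.
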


\begin{theorem}{Sparsity and Space of 3-D Hessian Matrix of $\HV$}
\label{thm:3Dsparsity}
The number of all non-zero components of the Hessian matrix of the mapping from a set $\mathbf{Y} \in \mathbb{R}^3$ in the objective space to the hypervolume indicator does never exceed $12n-6$.
\end{theorem}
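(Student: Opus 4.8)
The plan is to count the entries produced by Algorithm~\ref{alg:Hessian-3D}, which by Theorem~\ref{thm:3Dtrisweepsoundness} are exactly the non-zero components of the Hessian. First I would record two structural vanishing facts that restrict which components can be non-zero: since each first-order component $\partial\HV/\partial y^{(i)}_a$ is the area of the facet of the dominated region perpendicular to axis $a$, and this area depends only on the two complementary coordinates of the points, every pure second derivative $\partial^2\HV/\partial (y^{(i)}_a)^2$ and every same-axis mixed derivative $\partial^2\HV/\partial y^{(i)}_a\partial y^{(j)}_a$ vanishes. Hence a non-zero component $\partial^2\HV/\partial y^{(i)}_a\partial y^{(j)}_b$ must use two distinct axes $a\neq b$, and I would split the count into self terms ($i=j$) and cross terms ($i\neq j$), observing that the three sweeps ($h=3,2,1$) produce disjoint sets of matrix positions, distinguished by the axis of the first differentiation index.

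The self terms are the cheap part: in each sweep the algorithm emits exactly the two corner edges of every facet (the two assignments before the \texttt{if} block), giving $2n$ per sweep and $6n$ in total, so it remains to bound the cross terms by $6n-6$. Geometrically each cross term is a ``cap'' edge of some facet, controlled by a neighbouring point; in the sweep along axis $a$ these caps come either from points that become dominated in the $lw$-projection, each contributing two caps, or from the two flanking neighbours $\BF{y}^{(d[t][0])}$ and $\BF{y}^{(d[t][N_t+1])}$, contributing at most two more per processed point. Summing over one sweep and invoking Lemma~\ref{lem:amort} (so that the dominated-point contribution is at most $2(n-1)$), the cross count of a single sweep equals $2\sum_t N_t+2n-S_a$, where $S_a$ records how often a flanking neighbour is a sentinel and therefore lies on a reference hyperplane and yields no real component.

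The main obstacle is the sharp constant. The per-sweep estimate above is \emph{not} individually bounded by $2n-2$: a single sweep can produce as many as $2n-1$ cross terms, so the $-6$ in $12n-6$ cannot be extracted one sweep at a time. What is true, and what I would prove, is the global compensation $\sum_{a}S_a\geq 2\sum_{a}\sum_t N_t+6$ obtained by adding the three sweeps together; summed over all sweeps this is exactly equivalent to the cross bound $6n-6$. The cleanest route I would pursue is to identify the non-zero components, up to the symmetry of the Hessian, with the axis-parallel edges of the boundary surface $\partial D$ of the dominated region $D=\bigcup_i[\,\BF{y}^{(i)},\BF{r}\,]$ that separate two non-reference (point) faces, and then to apply Euler's formula $V-E+F=2$ to $\partial D$, which is a topological sphere because $D$ is star-shaped with respect to $\BF{r}$. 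Counting $F=3n+3$ faces ($3n$ point-facets, one per axis per point since all gradient components are positive, plus three reference faces) and exploiting the orthogonality of the surface (bounded vertex degrees and minimum face size) should turn Euler's relation into $E_{pp}\le 6n-3$ for the number of such edges, hence $2(6n-3)=12n-6$ components. I expect the delicate step to be the careful charging that produces the additive $-3$, which must be traced to the three reference faces and their common corner at $\BF{r}$; tightness is confirmed by the single-point case, where the bound evaluates to $6$, matching the three mixed partials of one box each counted with multiplicity two.
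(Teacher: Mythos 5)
Your reduction is correct, and it is actually sharper than the paper's own argument. The self terms do give exactly $6n$ entries, the cross count of sweep $a$ is $2\sum_t N_t + 2n - S_a$, the three sweeps fill disjoint sets of matrix positions, and therefore the theorem is equivalent to the global inequality $\sum_a S_a \ge 2\sum_a\sum_t N_t + 6$ that you isolate. You are also right that the constant $-6$ cannot be extracted sweep by sweep; the paper's proof glosses over precisely this point, since its per-iteration count of $2N_t+4$ combined with Lemma~\ref{lem:amort} gives $2(n-1)+4n=6n-2$ entries per sweep and hence $18n-6$ over three sweeps, so the stated $12n-6$ does not follow from that computation (and a single sweep can in fact produce up to about $4(n-1)$ cross terms, even more than the $2n-1$ you allow for). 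The genuine gap is that your pivotal compensation inequality is asserted (``what I would prove'') but never proved --- and it cannot be proved, because it is false for $n\ge 4$. To see what it really says, note that a point's flanking neighbour in the $l$- (resp.\ $w$-) direction of sweep $h$ is a sentinel exactly when no point entering its facet computation (i.e.\ no point with smaller $y_h$, cf.\ Theorem~\ref{thm:partial-derivative-hv}) has smaller $y_l$ (resp.\ $y_w$), i.e.\ exactly when its $(h,l)$- (resp.\ $(h,w)$-) projection is non-dominated; and a point is counted by $\sum_t N_t$ of sweep $h$ exactly when its $(l,w)$-projection is dominated. Writing $\delta_{ab}$ for the number of points whose $(a,b)$-projection is dominated by another point's projection, this gives $S_a = 2n-\delta_{al}-\delta_{aw}$ and $\sum_t N_t=\delta_{lw}$, so in general position the number of non-zero Hessian entries equals exactly $6n+4\left(\delta_{12}+\delta_{13}+\delta_{23}\right)$, and your inequality reads $\delta_{12}+\delta_{13}+\delta_{23}\le\tfrac{3}{2}(n-1)$. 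For $n\le 3$ this holds, because any two mutually non-dominated points of $\R^3$ produce a domination in exactly one of the three $2$-D projections, whence $\delta_{12}+\delta_{13}+\delta_{23}\le\binom{n}{2}$; this is why the paper's Example~2 attains exactly $30$ and why small cases are misleading.

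It fails at $n=4$: take $\BF{y}^{(1)}=(1,1,4)$, $\BF{y}^{(2)}=(2,4,1)$, $\BF{y}^{(3)}=(4,2,2)$, $\BF{y}^{(4)}=(3,3,3)$ and $\r=(5,5,5)$ (perturb the repeated values inside a point if desired; only the three coordinate-wise orderings matter). These four points are mutually non-dominated and in general position, yet $\BF{y}^{(1)}$ dominates the other three in the $(1,2)$-projection, $\BF{y}^{(2)}$ dominates $\BF{y}^{(3)},\BF{y}^{(4)}$ in the $(1,3)$-projection, and $\BF{y}^{(3)}$ dominates $\BF{y}^{(4)}$ in the $(2,3)$-projection, so $\delta_{12}+\delta_{13}+\delta_{23}=6$ and the Hessian has $6\cdot 4+4\cdot 6=48>42=12n-6$ non-zero entries (counting matrix entries, as the paper's Example~2 does). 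A slice of this is checkable by hand: $\partial\HVY/\partial y^{(1)}_3$ is minus the area of $[1,5]^2$ with the three quadrants anchored at $(2,4)$, $(3,3)$, $(4,2)$ removed, and differentiating that area shows the single Hessian row indexed by $(1,3)$ already contains $8$ non-zero entries. The same example closes off your Euler-formula route: with at most $3n+3$ faces and trivalent vertices, Euler's relation forces $E=3F-6\le 9n+3$ edges in total, which the example attains ($39$ edges, of which $24$ separate two point faces, so $E_{pp}=24>6n-3=21$); no charging scheme can push $E_{pp}$ down to $6n-3$, because the statement it would imply is false. In short, your diagnosis of where the difficulty lies is exactly right and exposes a real flaw, but the lemma you defer --- and with it the bound $12n-6$ itself --- is false in general; the correct general-position count is $6n+4\left(\delta_{12}+\delta_{13}+\delta_{23}\right)$, which can grow to roughly $18n$.
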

\begin{proof}
    The algorithm produces in each step $t$ exactly $2N_t+4$ entries of the Hypervolume Hessian Matrix. Per dominated point, there are $2$ non-zero components computed. There are exactly $4$ additional non-zero entries computed in each iteration $t$. Therefore the total computation time amounts to $2 N_t$ steps to process the newly dominated points in the 2-D projection, plus the $4$ additional components of the polytope boundary in each iteration. By amortization described in Lemma~\ref{lem:amort} of $N_t$, we obtain $2(n-1)+4n$ components per sweep, and we carry out $3$ sweeps, resulting in at most $12n-6$ non-zero components. 
\end{proof}

\section{General Expression of the N-Dimensional Hypervolume Hessian Matrix} \label{sec:general-computation}
For the general cases ($m>3$), it suffices to compute the term $\partial^2 \HVY/\partial \Y \partial \Y^\top$ and utilize Eq.~\eqref{eq:HV-Hessian} for computing the hypervolume Hessian. We summarize the computation in Alg.~\ref{alg:general} and explain the details as follows. Let $\BF{A} =\partial^2 \HVY/\partial\Y \partial\Y^\top \in \R^{\npoints m \times \npoints m}$. Without loss of generality, we calculate the entries of $\BF{A}$ in a column-wise manner - given indices $i\in[1..\npoints]$ and $k\in[1..m]$, column $ik$ of $\BF{A}$ takes the following form:
$$
\left(\frac{\partial}{\partial\BF{y}^{(1)}}\left(\frac{\partial\HVY}{\partial y_k^{(i)}}\right)^\top, \ldots,
\frac{\partial}{\partial\BF{y}^{(i)}}\left(\frac{\partial\HVY}{\partial y_k^{(i)}}\right)^\top, \ldots, \frac{\partial}{\partial\BF{y}^{(\npoints)}}\left(\frac{\partial\HVY}{\partial y_k^{(i)}}\right)^\top\right)^\top,
$$
which will be discussed in two scenarios: (1) $\partial(\partial \HVY/\partial y_k^{(i)})/\partial \BF{y}^{(i)}$ and (2) $\partial(\partial \HVY/\partial y_k^{(i)})/\partial \BF{y}^{(j)}, i\neq j$.  
The dominated space of $S\subseteq\R^m$, that is the subset of $\R^m$ dominated by the points in $S$, is denoted by $\Dom{S} = \{\BF{p}\in\R^m\colon \exists \BF{y}\in S(\BF{y}\prec \BF{p}) \wedge \BF{p}\prec \r\}$. Also, we take the canonical basis $\{\BF{e}_i\}_i$ of Euclidean spaces in our elaboration. 

\begin{algorithm2e}[t]
\caption{General algorithm for the hypervolume Hessian matrix}\label{alg:general}
\textbf{Input:} $X=\{\BF{x}^{(1)}, \ldots, \BF{x}^{(\npoints)}\}$: decision points, $Y=\{\BF{y}^{(1)}, \ldots, \BF{y}^{(\npoints)}\}$: objective points, $\nabla\F, \nabla^2\F$: Jacobian and Hessian of the objective function\;
 \textbf{Output:} $\BF{H}$: hypervolume Hessian matrix\;
 $\BF{X}\leftarrow \operatorname{concat}(X)$\;
$\BF{A} \leftarrow \BF{0}_{\npoints m \times \npoints m}$\Comment*[r]{$\BF{A} \coloneqq \partial^2 \HVY/\partial \Y \partial \Y^\top$}
\For{$i = 1,\ldots, \npoints$}{
    \lIf{$\BF{x}^{(i)}$ is dominated}{\textbf{continue}}
    \For{$k = 1,\ldots, m$}{
        $\BF{y}_{\shortminus k}^{(i)}\leftarrow \proj_k(\BF{y}^{(i)})$;
        
        $Y' \leftarrow \{\proj_k(\BF{y}) \colon \BF{y}\in Y \wedge y_k < y^{(i)}_k\}$\;
        $I(Y,i,k)\leftarrow \{\alpha\in [1..n]\colon y^{(\alpha)}_{k} < y^{(i)}_{k}\}$\;
        
        \tcp{compute $\partial(\partial \HVY/\partial y_k^{(i)})/\partial \BF{y}^{(i)}$}
        \For{$l=1,\ldots,m$}{
        	\lIf{$l=k$}{$v_l\leftarrow 0$; \textbf{continue}}
        	$p \leftarrow l \text{ if } l < k; \text{otherwise } p \leftarrow l-1$\;
        	$v_l \leftarrow \HVC\left(\proj_{p}(\BF{y}^{(i)}_{\shortminus k}), \left\{\proj_{p}(\BF{y}^{(\alpha)}_{\shortminus k})\colon \alpha\in I(Y', i, l)\right\}\right)$\Comment*[r]{$v_l \coloneqq \partial^2 \HVY/\partial y^{(i)}_l \partial y_k^{(i)}$; Eq.~\eqref{eq:partial-ii-cross-term}}
        }

    	\For{$s = im + 1,\ldots, (i+1)m$}{
    		$l\leftarrow s - im$\;
    	    $\BF{A}_{s, im+k} \leftarrow v_l$\;
    	}
    	
    	\tcp{compute $\partial(\partial \HVY/\partial y_k^{(i)})/\partial \BF{y}^{(j)}$}
    	$\widehat{Y} \leftarrow \left\{\textsc{clip}(\BF{y};\BF{y}_{\shortminus k}^{(i)})\colon \BF{y} \in Y' \right\}$\Comment*[r]{clipping operation; Eq.~\eqref{eq:clipping}}
    	
    	\For{$j \in I(Y,i,k)$}{
    	    $\widehat{\BF{y}}_{\shortminus k}^{(j)} \leftarrow \widehat{Y}[j]$\Comment*[r]{take element $j$ from set $\Y'$}

    		\For{$l=1,\ldots,m$}{
        		\lIf{$l=k$}{$w_l\leftarrow 0$; \textbf{continue}}
        		$p \leftarrow l \text{ if } l < k; \text{otherwise } p \leftarrow l-1$\;
        		$w_l \leftarrow -\HVC\left(\proj_{p}(\widehat{\BF{y}}^{(j)}_{\shortminus k}), \left\{\proj_{p}(\widehat{\BF{y}}^{(\alpha)}_{\shortminus k})\colon \alpha\in I(
        		\widehat{Y}, j, p)\right\}\right)$\Comment*[r]{$w_l \coloneqq \partial^2 \HVY/\partial y^{(j)}_l \partial y_k^{(i)}$}
        	}
	
    		\For{$s = jm + 1,\ldots, (j+1)m$}{
    			$l\leftarrow s - jm$\;
    		    $\BF{A}_{s, im+k} \leftarrow w_l$\;
    		}
    	}
    }
}
$T \leftarrow \nabla^2\F(\X)$\;
$\BF{H} \leftarrow \nabla{\F}(\X)^\top\BF{A}\nabla{\F}(\X) + \sum_{\alpha=1}^{\npoints}\sum_{\beta=1}^{m} \left(\partial \HVY/\partial f_\beta(\BF{x}^{(\alpha)})\right) T^{\alpha\beta}$\Comment*[r]{Eq.~\eqref{eq:HV-Hessian}}
\end{algorithm2e}
\subsection{Partial derivative $\partial(\partial \HVY/\partial y_k^{(i)})/\partial \BF{y}^{(i)}$} 
Intuitively, from Fig.~\ref{fig:HV3Di}, we observe that $\partial^2 \HVY/\partial y_k^{(i)}\partial y_k^{(i)}$ is always zero for the 3D case since $\partial \HVY/\partial y_k^{(i)}$ is essentially the hypervolume improvement of the projection of $\BF{y}^{(i)}$ along axis $\BF{e}_k$ (bright yellow area in Fig.~\ref{fig:HV3Di}), ignoring the points that dominates $\BF{y}^{(i)}$ after the projection. In addition, $\partial^2 \HVY/\partial y_k^{(i)}\partial y_\alpha^{(i)}, \alpha\neq i$ equals the negation of partial derivative of the hypervolume indicator w.r.t.~$y_\alpha^{(i)}$ in the $m-1$-dimensional space, resulted from the projection. The computation of this quantity has been investigated in great detail previously~\cite{EmmerichD12}. We prove this argument for $m>3$ as follows. First, we define the orthogonal projection operator onto $\BF{e}_k^\perp$:
$$
\proj_k\colon \BF{y} \mapsto (\ldots, y_{k-1}, y_{k+1}, \ldots)^\top,
$$
which drops the $k$-th components of the input point $\BF{y}$. Here, we do not specify the dimensionality of $\BF{y}$ on purpose, since later in the discussion, $\proj_k$ will be applied to points in $\R^m$ or $\R^{m-1}$.
\begin{theorem}[Partial derivative of $\HVY$] \label{thm:partial-derivative-hv} Assume a finite approximation set $Y=\{\BF{y}^{(1)}, \ldots \BF{y}^{(\npoints)}\}\in (\R^m)^{\npoints}$. We define $\BF{y}^{(\alpha)}_{\shortminus k} = \proj_k(\BF{y}^{(\alpha)}), \alpha\in[1..\npoints]$ and the index set $I(Y,i,k) = \{\alpha\in [1..n]\colon y^{(\alpha)}_{k} < y^{(i)}_{k}\}$ that selects the points in $Y$ whose $k$-th component is strictly better/smaller than $y^{(i)}_{k}$. The partial derivative of $\HVY$ w.r.t. $y_k^{(i)}$ for all $i\in[1..\npoints]$ and $k\in[1..m]$ admits the following expression:
$$
\frac{\partial \HVY(\Y)}{\partial y_k^{(i)}} = -\HVC\left(\BF{y}^{(i)}_{\shortminus k}, \left\{\BF{y}^{(\alpha)}_{\shortminus k}\colon \alpha\in I(Y,i,k)\right\}\right),
$$
where $\Y = \operatorname{concat}(Y)$ and $\HVC(\BF{y}, P)$ is the hypervolume contribution of point $\BF{y}$ to a finite point set $P$, i.e.,
$$
\HVC(\BF{y}, P) = \HV(P\cup \{\BF{y}\}) - \HV(P) =  \lambda_{m-1}\left(\Dom[m-1]{\BF{y}}\setminus \cup_{\BF{p}\in P}\Dom[m-1]{\BF{p}}\right).
$$
\end{theorem}
\begin{proof}
Without loss of generality, we can assume the points in $Y$ are indexed in the ascending order w.r.t. the $k$-component, i.e., 
$y_k^{(1)} < \cdots y_k^{(i-1)} < y_k^{(i)} < y_k^{(i+1)} \cdots < y_k^{(\npoints)}$. Then, we have $I(Y,i,k) = [1..i-1]$. Assume a small perturbation $\delta$ that satisfies $0 < |\delta| < \min\{y_k^{(i)} - y_k^{(i-1)}, y_k^{(i+1)} - y_k^{(i)}\}$. After adding $\delta$ to $y_k^{(i)}$, we shall denote the resulting range of $y_k^{(i)}$ by $R(\delta) = [y_k^{(i)}, y_k^{(i)} + \delta]$ if $\delta >0$, or $R(\delta) = [y_k^{(i)} - \delta, y_k^{(i)}]$ if $\delta < 0$. Then, the dominated space of $\BF{y}^{(i)}$ will expand ($\delta < 0$) or shrink ($\delta > 0$) by the follow set:
$$
S(\delta) = R(\delta) \times \Dom[m-1]{\BF{y}^{(i)}_{\shortminus k}}, 
$$
where $\times$ denotes the Cartesian product. 
Due to the assumption on $\delta$, we observe that (1) $S(\delta)$ is disjoint from $\Dom{\BF{y}^{(\alpha)}}$ for $\alpha\in[i+1...\npoints]$ and (2) for $\alpha\in[1...i-1]$, $S(\delta)\cap\Dom{\BF{y}^{(\alpha)}} \neq \emptyset$; However, $\Dom{\BF{y}^{(\alpha)}}$ will not be affected by this small perturbation on $y_k^{(i)}$. Hence, the change of \HV can only be attributed to $S(\delta)$, which is:
\begin{align}
    \Delta \HV(\delta) &= -\sign(\delta)\lambda_m\left(S(\delta)\setminus\left(S(\delta) \cap \left(\cup_{\alpha=1}^{\npoints}\Dom{\BF{y}^{(\alpha)}}\right)\right)\right) \nonumber\\
    &=-\sign(\delta)\lambda_m\left(S(\delta)\setminus\left(\cup_{\alpha=1}^{\npoints}\Dom{\BF{y}^{(\alpha)}}\right)\right) \nonumber\\
    &=-\sign(\delta)\lambda_m\left(S(\delta)\setminus\left(\cup_{\alpha\in I}\Dom{\BF{y}^{(\alpha)}}\right)\right) \label{eq:delta-HV1},
\end{align}
where $\sign$ is the sign function. Noticing that $\forall\BF{p}\in\cup_{\alpha\in I}\Dom{\BF{y}^{(\alpha)}}(p_k\notin R(\delta) \iff \BF{p}\notin S(\delta))$, meaning the only subset of $\R^m$ that intersects $S(\delta)$ is $U = \{\BF{p} \in \cup_{\alpha\in I}\Dom{\BF{y}^{(\alpha)}}\colon p_k \in R(\delta)\}$. Due to the construction of $U$, it admits the following expression: 
$$
U = R(\delta) \times \bigcup_{\alpha\in I}\Dom[m-1]{\proj_k(\BF{y}^{(\alpha)})}.
$$
Thereby, we can continue simplifying Eq.~\eqref{eq:delta-HV1} using $U$:
\begin{align}
    \Delta \HV(\delta) &=-\sign(\delta)\lambda_m\left(S(\delta)\setminus U\right) \nonumber\\
    &=-\sign(\delta)\lambda_m\left(R(\delta)\times \left[\Dom{\BF{y}^{(i)}_{\shortminus k}}\setminus \left(\cup_{\alpha\in I}\Dom{\BF{y}^{(\alpha)}_{\shortminus k}}\right)\right]\right) \nonumber\\
    &=-\sign(\delta)\lambda_1\left(R(\delta)\right)\lambda_{m-1}\left(\left[\Dom{\BF{y}^{(i)}_{\shortminus k}}\setminus \left(\cup_{\alpha\in I}\Dom{\BF{y}^{(\alpha)}_{\shortminus k}}\right)\right]\right) \nonumber\\
    &=-\delta\lambda_{m-1}\left(\Dom[m-1]{\BF{y}^{(i)}_{\shortminus k}}\setminus \left(\cup_{\alpha\in [1..i-1]}\Dom[m-1]{\BF{y}^{(\alpha)}_{\shortminus k}}\right)\right) \nonumber \\
    &=-\delta\HVC\left(\BF{y}^{(i)}_{\shortminus k}, \{\BF{y}^{(\alpha)}_{\shortminus k}\colon \alpha\in [1..i-1]\}\right).\nonumber
\end{align}
Finally, we have:
$$
\frac{\partial\HVY}{\partial y_k^{(i)}} = \lim_{\delta \to 0}\frac{\Delta \HV(\delta)}{\delta} = -\HVC\left(\BF{y}^{(i)}_{\shortminus k}, \left\{\BF{y}^{(\alpha)}_{\shortminus k}\colon \alpha\in [1..i-1]\right\}\right) = -\HVC\left(\BF{y}^{(i)}_{\shortminus k}, \left\{\BF{y}^{(\alpha)}_{\shortminus k}\colon \alpha\in I(Y,i,k)\right\}\right).
$$
\end{proof}
\begin{corollary}\label{corollary:diagonal-partial-derivatives}
It follows immediately from Thm.~\ref{thm:partial-derivative-hv} that 
\begin{equation}\label{eq:partial-ii-diagonal-term}
    \frac{\partial^2 \HVY}{\partial y_k^{(i)}\partial y_k^{(i)}}=0, \quad i\in[1..\npoints], k\in[1..m], m\in\mathbb{N}_{>0}. 
\end{equation}
For computing $\partial^2 \HVY/\partial y_l^{(i)}\partial y_k^{(i)}$ ($l\neq k$), we recursively apply Thm.~\ref{thm:partial-derivative-hv} to the point set 
$$Y'\coloneqq\{\BF{y}^{(i)}_{\shortminus k}\}\cup\{\BF{y}^{(\alpha)}_{\shortminus k}\colon \alpha\in I(Y,i,k)\}.$$
Also, noticing that $y_l^{(i)}$ is the $l$-th component of $\BF{y}^{(i)}_{\shortminus k}$ if $l<k$, and is the $l-1$-th component otherwise, we define $p = l \text{ if } l < k; \text{otherwise } p = l-1$. Taking the index set $I(Y', i, l) = \{\alpha\in [1..|Y'|]\colon y^{(\alpha)}_{l} < y^{(i)}_{l}\}$, we have:
\begin{equation}
      \frac{\partial^2 \HVY}{\partial y_l^{(i)}\partial y_k^{(i)}}=\HVC\left(\proj_{p}(\BF{y}^{(i)}_{\shortminus k}), \left\{\proj_{p}(\BF{y}^{(\alpha)}_{\shortminus k})\colon \alpha\in I(Y', i, l)\right\}\right), \label{eq:partial-ii-cross-term}
\end{equation}
which is $m-2$-dimensional Lebesgue measure. 
This recursive computation is employed by line 14 of Alg.~\ref{alg:general}.
\end{corollary}
\begin{proof}
Eq.~\eqref{eq:partial-ii-diagonal-term} is obvious since $\partial\HVY/\partial y_k^{(i)}$ is not a function of $y_k^{(i)}$ (due to the $\proj_k$ operation in Thm.~\ref{thm:partial-derivative-hv}).
For Eq.~\eqref{eq:partial-ii-cross-term}, we start from the result of Thm.~\ref{thm:partial-derivative-hv}, which is $\partial\HVY/\partial y_k^{(i)}= -\HVC(\BF{y}^{(i)}_{\shortminus k}, \{\BF{y}^{(\alpha)}_{\shortminus k}\colon \alpha\in I(Y,i,k)\})$. Note that when the perturbation of $y_{l}^{(i)}$ ($l\neq k$) is sufficiently small, the resulting change of the hypervolume contribution of $\BF{y}^{(i)}_{\shortminus k}$ to $\{\BF{y}^{(\alpha)}_{\shortminus k}\colon \alpha\in I(Y,i,k)\}$ is exactly the same as the change of \HV of the set $Y'\coloneqq\{\BF{y}^{(i)}_{\shortminus k}\}\cup\{\BF{y}^{(\alpha)}_{\shortminus k}\colon \alpha\in I(Y,i,k)\}$. Noticing that $y_l^{(i)}$ is the $l$-th component of the projected point $\BF{y}^{(i)}_{\shortminus k}$ if $l<k$ and is the $l-1$-th component otherwise, we define $p = l \text{ if } l < k; \text{otherwise } p = l-1$. Hence, when applying Thm.~\ref{thm:partial-derivative-hv} again on $Y'$, we have to project all points in $Y'$ onto $\BF{e}_{p}^\perp$. Now, we can simplify $\partial(\partial \HVY/\partial y_k^{(i)})/\partial y_l^{(i)}, l\neq i$ as follows:
\begin{align*}
    \frac{\partial^2 \HVY}{\partial y_l^{(i)}\partial y_k^{(i)}} &= -\frac{\partial}{\partial y_{p}^{(i)}}\left(\HVC\left(\BF{y}^{(i)}_{\shortminus k}, \{\BF{y}^{(\alpha)}_{\shortminus k}\colon \alpha\in I(Y,i,k)\}\right)\right)\\
    &=-\frac{\partial}{\partial y_{p}^{(i)}}\HVY\left(\operatorname{concat}\left(Y'\right)\right) \\
    &=\HVC\left(\proj_{p}(\BF{y}^{(i)}_{\shortminus k}), \left\{\proj_{p}(\BF{y}^{(\alpha)}_{\shortminus k})\colon \alpha\in I(Y', i, l)\right\}\right).
\end{align*}
The last step above is obtained by applying Thm.~\ref{thm:partial-derivative-hv} again on the point set $Y'$, which involves projecting the objective points in $\R^{m-1}$ onto $\BF{e}_{p}^\perp$.
\end{proof}
In all, we have elaborated the method to compute $\partial(\partial \HVY/\partial y_k^{(i)})/\partial \BF{y}^{(i)}$, which constitutes $d$ entries of column $ik$ in matrix $\BF{A}$. We proceed to the computation of the remaining entries in the next sub-section.

\subsection{Partial derivative $\partial\left(\partial \HVY/\partial y_k^{(i)}\right)/\partial \BF{y}^{(j)}, i\neq j$} 
Based on Thm.~\ref{thm:partial-derivative-hv}, we conclude that 
$
\partial(\partial \HVY/\partial y_k^{(i)})/\partial \BF{y}^{(j)} = 0 \iff j\notin I(Y, i, k), 
$
since $\partial \HVY/\partial y_k^{(i)}$ only depends on $\BF{y}^{(i)}_{\shortminus k}$ and $\{\BF{y}^{(\alpha)}_{\shortminus k}\colon \alpha\in I(Y, i, k)\}$. Also, for all $j\in I(Y, i, k)$, we have
\begin{equation} \label{eq:partial-ij-diagonal-term}
    \partial^2 \HVY/\partial y_k^{(j)}\partial y_k^{(i)} = 0,
\end{equation}
due to the projection operation. Note that, $\partial(\partial \HVY/\partial y_k^{(i)})/\partial \BF{y}^{(j)}$ constitutes the remaining $(\npoints-1)$ entries of column $ik$ of matrix $\BF{A}$, containing at most $(i-1)(d-1)$ nonzero values, where $i$ depends on the number of points which have a smaller $k$th-component than that of $\BF{y}^{(i)}$. Hence, we can bound the nonzero elements in $\BF{A}$ by $ O(\npoints(\npoints-1)/2 (d-1))$ $=O(n^2 d)$. Note that possible a sharper bound can be formulated by considering the technique used to prove Thm.~\ref{thm:3Dsparsity} in more than three dimensions. 

The remaining partial derivatives can be computed by first clipping points in $\{\BF{y}^{(\alpha)}_{\shortminus k}\colon \alpha\in I(Y, i, k)\}$ by $\BF{y}^{(i)}_{\shortminus k}$ from below (line 13 in Alg.~\ref{alg:general}; sub-procedure \textsc{clip}):
$\widehat{\BF{y}}^{(\alpha)}_{\shortminus k} = \textsc{clip}(\BF{y}^{(\alpha)}_{\shortminus k};\BF{y}^{(i)}_{\shortminus k})$, where for $\BF{a},\BF{b}\in\R^{m-1}$, 
\begin{equation}\label{eq:clipping}
    \operatorname{CLIP}(\BF{a};\BF{b})=(a_1 + \min\{0, a_1 - b_1\}, \ldots,  a_{m-1} + \min\{0, a_{m-1} - b_{m-1}\}\})^\top.
\end{equation}
Note that, (1) the clipping operation restricts points $\BF{y}^{(\alpha)}_{\shortminus k},\alpha\in I(Y,i,k)$ in $\Dom[m-1]{\BF{y}^{(i)}_{\shortminus k}}$, which is crucial to the following steps since otherwise, the infinitesimal perturbation of $\BF{y}^{(\alpha)}_{\shortminus k}$ will not change the hypervolume contribution of $\BF{y}^{(\alpha)}_{\shortminus k}$; (2) this clipping operation does not change the Lebesgue measure of $\Dom[m-1]{\BF{y}^{(i)}_{\shortminus k}}\setminus \left(\cup_{\alpha\in I}\Dom[m-1]{\BF{y}^{(\alpha)}_{\shortminus k}}\right)$.
After taking the clipping operation, we have the following result:
\begin{align}
    &\HVC\left(\BF{y}^{(i)}_{\shortminus k}, \{\BF{y}^{(\alpha)}_{\shortminus k}\colon \alpha\in I(Y,i,k)\}\right) 
    \nonumber\\
    &= \lambda_{m-1}\left(\Dom[m-1]{\BF{y}^{(i)}_{\shortminus k}}\setminus \left(\cup_{\alpha\in I}\Dom[m-1]{\widehat{\BF{y}}^{(\alpha)}_{\shortminus k}}\right)\right) \nonumber\\
    &=\lambda_{m-1}\left(\Dom[m-1]{\BF{y}^{(i)}_{\shortminus k}}\setminus\Dom[m-1]{\{\widehat{\BF{y}}^{(\alpha)}_{\shortminus k}\colon \alpha\in I(Y,i,k)\}}\right) \nonumber\\
    &=\lambda_{m-1}\left(\Dom[m-1]{\BF{y}^{(i)}_{\shortminus k}}\right) - \lambda_{m-1}\left(\Dom[m-1]{\{\widehat{\BF{y}}^{(\alpha)}_{\shortminus k}\colon \alpha\in I(Y,i,k)\}}\right) \label{eq:intermediate}. 
\end{align}
The last step in the above equation is due to the fact that after clipping, $\Dom[m-1]{\{\widehat{\BF{y}}^{(\alpha)}_{\shortminus k}\colon \alpha\in I(Y,i,k)\}}\subset \Dom[m-1]{\BF{y}^{(i)}_{\shortminus k}}$.
As with corollary~\ref{corollary:diagonal-partial-derivatives}, we define $p = l, \text{ if } l < k; p = l-1, \text{ if } l > k$.
For $j\neq i$, $l\neq k$, we have: 
\begin{align}
    &\frac{\partial^2 \HVY}{\partial y_l^{(j)}\partial y_k^{(i)}} \nonumber\\ 
    &=-\frac{\partial}{\partial y_{p}^{(j)}}\left(\HVC\left(\BF{y}^{(i)}_{\shortminus k}, \{\BF{y}^{(\alpha)}_{\shortminus k}\colon \alpha\in I(Y, i, k)\}\right)\right)  \nonumber\\
    &=-\frac{\partial}{\partial y_{p}^{(j)}}\left[\lambda_{m-1}\left(\Dom[m-1]{\BF{y}^{(i)}_{\shortminus k}}\right) - \lambda_{m-1}\left(\Dom[m-1]{\{\widehat{\BF{y}}^{(\alpha)}_{\shortminus k}\colon \alpha\in I(Y,i,k)\}}\right)\right] && \text{apply Eq.~\eqref{eq:intermediate}} \nonumber\\
    &=\frac{\partial}{\partial y_{p}^{(j)}}\lambda_{m-1}\left(\Dom[m-1]{\{\widehat{\BF{y}}^{(\alpha)}_{\shortminus k}\colon \alpha\in I(Y, i, k)\}}\right)  \nonumber\\
    &=\frac{\partial}{\partial y_{p}^{(j)}}\HVY\left(\operatorname{concat}\left(\left\{\widehat{\BF{y}}^{(\alpha)}_{\shortminus k}\colon \alpha\in I(Y,i,k)\right\}\right)\right) \label{eq:intermediate2}.
\end{align}
Based on the above result, we can apply Thm.~\ref{thm:partial-derivative-hv} on the set $\{\widehat{\BF{y}}^{(\alpha)}_{\shortminus k}\colon \alpha\in I(Y,i,k)\}\subset \R^{m-1}$ for computing the second-order derivatives, leading to the following corollary.
\begin{corollary}\label{corollary:off-diagonal-partial-derivatives}
Let $p = l, \text{ if } l < k; p = l-1, \text{ if } l > k$ and $\widehat{Y} = \{\widehat{\BF{y}}^{(\alpha)}_{\shortminus k}\colon \alpha\in I(Y,i,k)\}$, where $\widehat{\BF{y}}^{(\alpha)}_{\shortminus k} = \textsc{clip}(\BF{y}^{(\alpha)}_{\shortminus k};\BF{y}^{(i)}_{\shortminus k})$ and $\BF{y}^{(\alpha)}_{\shortminus k} = \proj_k(\BF{y}^{\alpha})$.
Also, we define the index set $I(\widehat{Y}, j, p) = \{\alpha\in [1..|\widehat{Y}|]\colon \widehat{y}^{(\alpha)}_p < \widehat{y}^{(j)}_{p}\}$ 
($\widehat{y}^{(j)}_{p}$ is the $p$-th component of $\widehat{\BF{y}}^{(j)}_{\shortminus k}$). For $j\neq i, l\neq k$, the partial derivative $\partial(\partial \HVY/\partial y_k^{(i)})/\partial y_l^{(j)}$ admits the following expression:
\begin{equation}
    \frac{\partial^2 \HVY}{\partial y_l^{(j)}\partial y_k^{(i)}} = -\HVC\left(\proj_{p}(\widehat{\BF{y}}^{(j)}_{\shortminus k}), \{\proj_{p}(\widehat{\BF{y}}^{(\alpha)}_{\shortminus k})\colon \alpha\in I(\widehat{Y}, j, p)\}\right), \label{eq:partial-ij-cross-term}
\end{equation}
which is $m-2$-dimensional Lebesgue measure. This recursive computation is employed by line 24 of Alg.~\ref{alg:general}. 
\end{corollary}
\begin{proof}
Considering Eq.~\eqref{eq:intermediate2}, the second-order derivatives of interest can be obtained by apply Thm.~\ref{thm:partial-derivative-hv} on the set $\widehat{Y}=\{\widehat{\BF{y}}^{(\alpha)}_{\shortminus k}\colon \alpha\in I(Y,i,k)\}\subset \R^{m-1}$.
\end{proof}
In summary, with Eqs.~\eqref{eq:partial-ii-diagonal-term}~\eqref{eq:partial-ii-cross-term}~\eqref{eq:partial-ij-diagonal-term}~\eqref{eq:partial-ij-cross-term}, we have specified the computation of all entries of column $ik$ of matrix $\BF{A}$. Iterating over all the columns will compute the full matrix for our needs. To calculate the hypervolume Hessian matrix, it suffices to substitute term $\partial^2 \HVY/\partial\Y \partial\Y^\top$ in Eq.~\eqref{eq:HV-Hessian} with $\BF{A}$ and evaluate the equation (lines 20 - 23 in Alg.~\ref{alg:general}).

\section{Numerical Examples}
\label{sec:numerical}
In this section, we showcase some numerical examples of the computation of the hypervolume Hessian matrix. For the sake of comprehensibility, we only compute the Hessian w.r.t.~the objective points since the Hessian matrix w.r.t.~the decision points can be easily obtained using the former (see Eq.~\eqref{eq:HV-Hessian}). We specify the objective points for the numerical problem below.
\begin{itemize}
    \item Example 1: $m=3$, $n=2$, $\Y=[(5, 3, 7)^\top, (2, 1, 10)^\top]^\top$, and $\r=(9, 10, 12)^\top$.
    \item Example 2: $m=3$, $n=3$, $\Y=[(8, 7, 10)^\top, (4, 11, 17)^\top, (2, 9, 21)^\top]^\top$, and $\r=(10, 13, 23)^\top$.
    \item Example 3: $m=3$, $n=6$, $\Y=[(16, 23, 1)^\top, (14, 32, 2)^\top, (12, 27, 3)^\top, (10, 21, 4)^\top, (8, 33, 5)^\top, (6.5, 31, 6)^\top]^\top$, and $\r=(17, 35, 7)^\top$.
    \item Example 4: $m=4$, $n=5$, $\Y=[(16, 23, 1, 8)^\top, (14, 32, 2, 5)^\top, (12, 27, 3, 1)^\top, (10, 21, 4, 9)^\top, (8, 33, 5, 3)^\top]^\top$, and $\r=(17, 35, 7, 10)^\top$.
\end{itemize}
We illustrate the corresponding Hessian matrices as heatmaps in Fig.~\ref{fig:numerical-examples}, from which we see clearly a high sparsity in all cases. Moreover, for the second example with $n=3$ points in 3-D objective space, as predicted by Theorem \ref{thm:3Dsparsity}, we obtain exactly $12n-6$ $(=30)$ positive components. Also, we have verified the above computation by comparing the results obtained from \texttt{Python}, \texttt{Mathematica}, and automatic differentiation performed in our previous work~\cite{WangEDHS22}.
\begin{figure}[t]
    \centering
    \subfloat[Example 1]{\includegraphics[width=.5\textwidth, trim=10mm 5mm 20mm 16mm, clip]{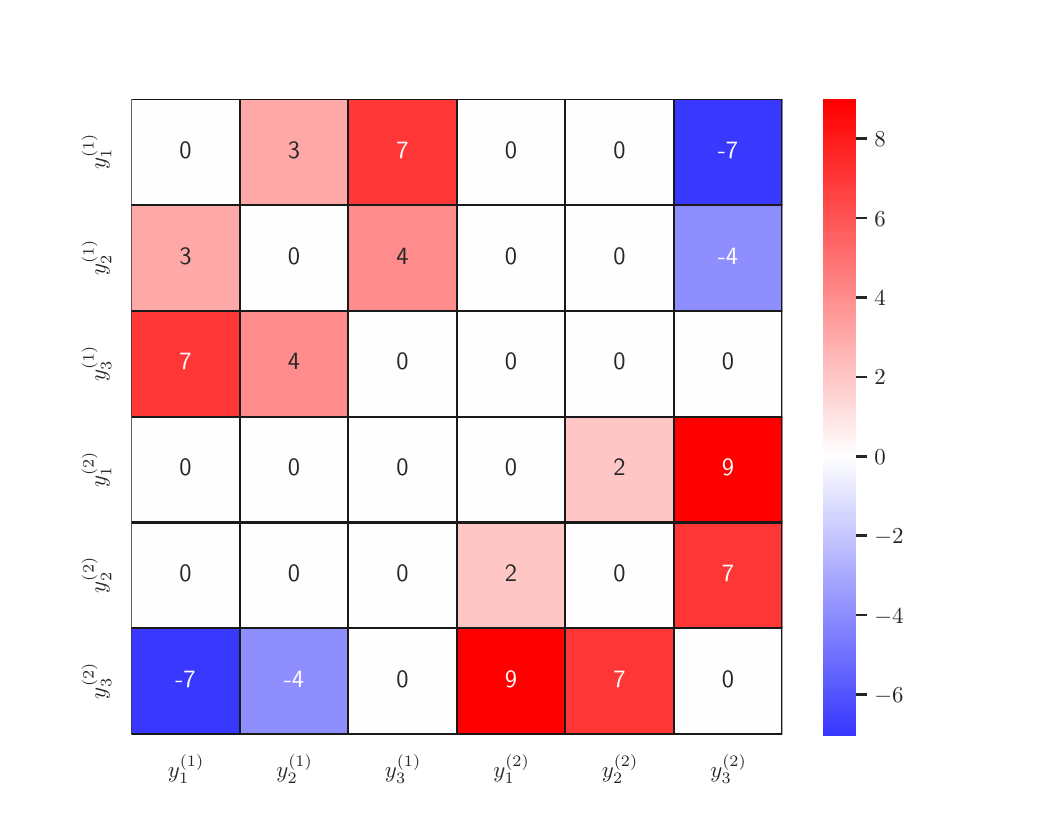}}
    \subfloat[Example 2]{\includegraphics[width=.5\textwidth, trim=10mm 5mm 20mm 16mm, clip]{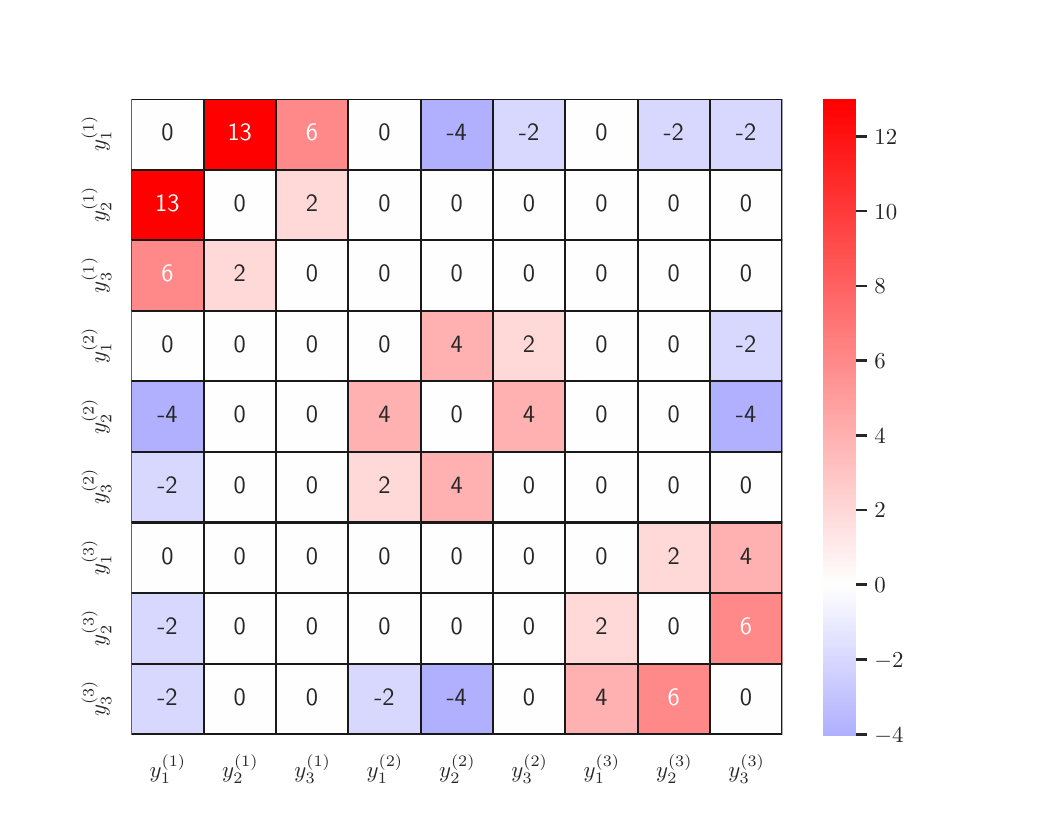}}\\
    \subfloat[Example 3]{\hspace{-3mm}\includegraphics[width=.55\textwidth, trim=10mm 5mm 20mm 14mm, clip]{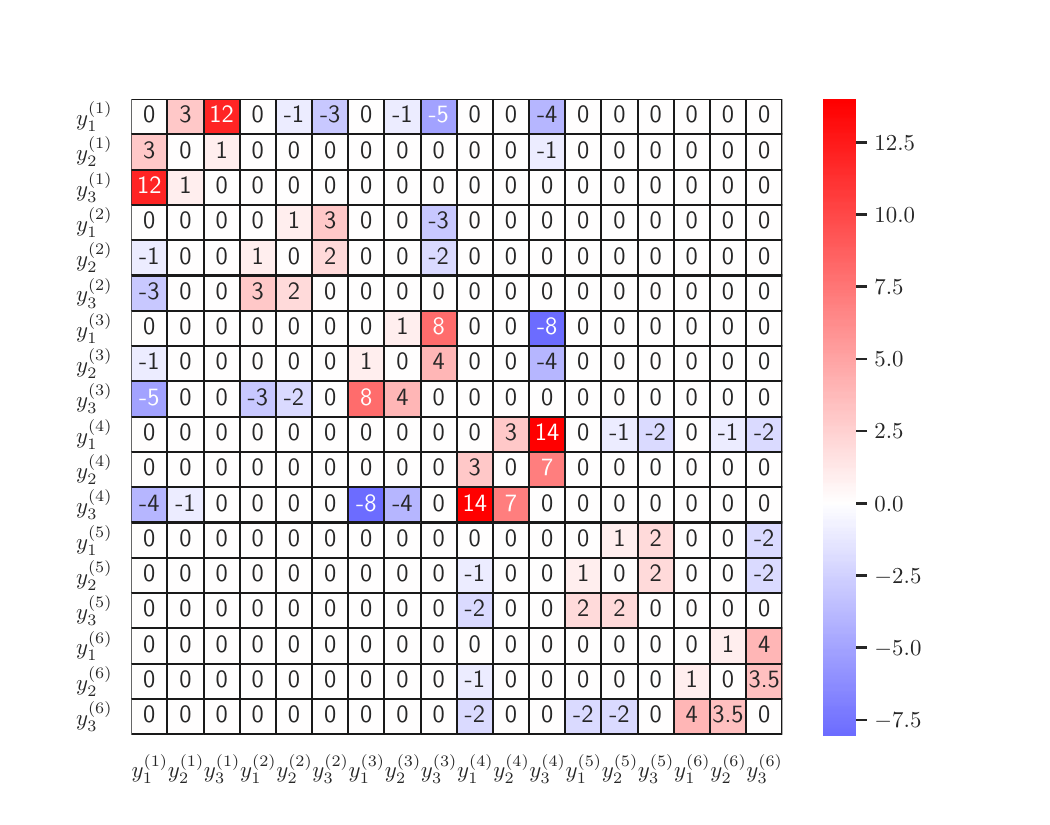}}
    \subfloat[Example 4]{\includegraphics[width=.55\textwidth, trim=3mm 3mm 10mm 3mm, clip]{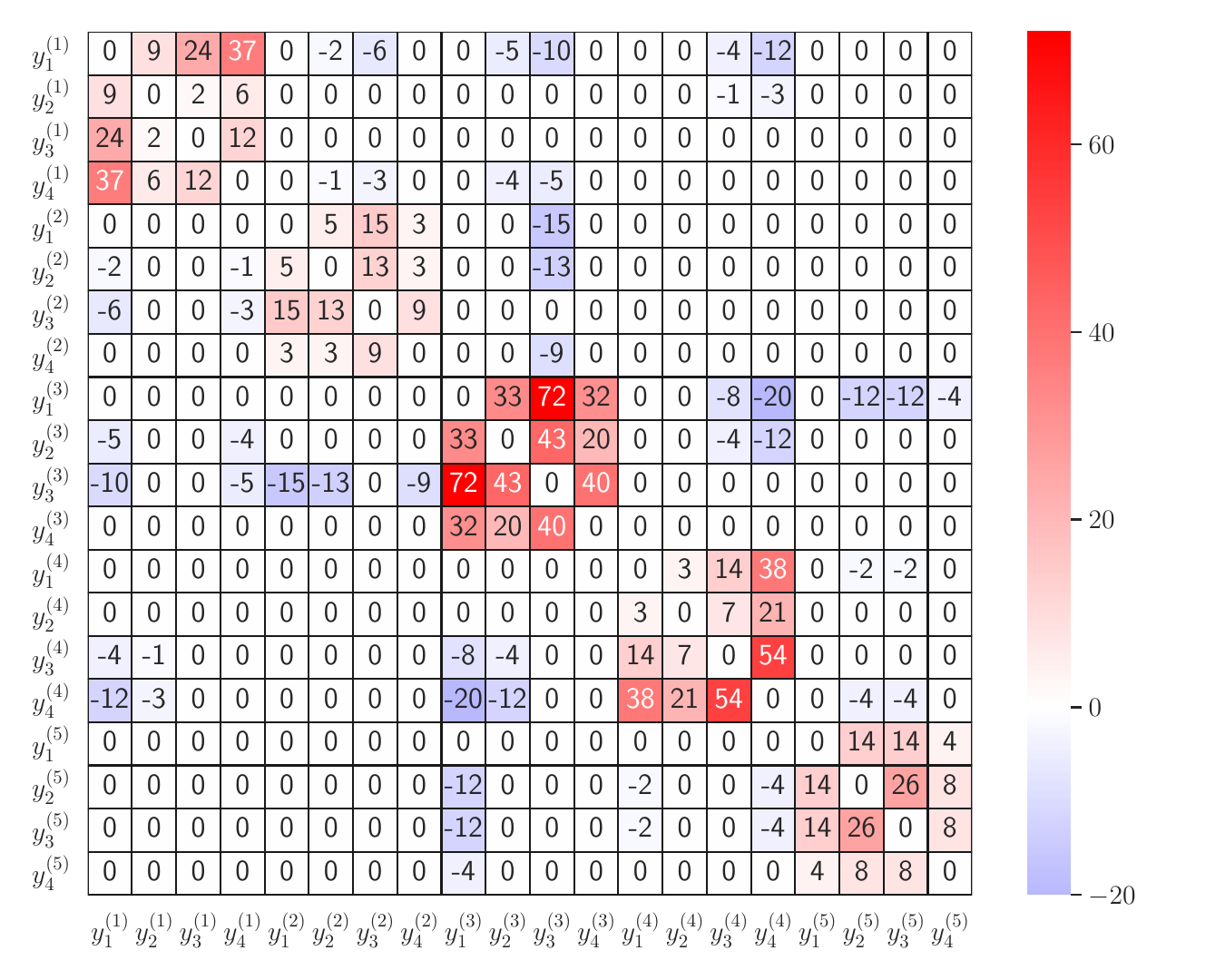}}
    \caption{The hypervolume Hessian matrix $\partial^2 \HVY/\partial \Y\partial \Y^\top$ w.r.t.~objective vectors rendered as heatmaps for three examples given in Sec.~\ref{sec:numerical}.}
    \label{fig:numerical-examples}
\end{figure}

\section{Discussion and Outlook}
\label{sec:disuss}
This paper highlights two approaches for computing the components of the Hessian matrix of the hypervolume indicator $\HV$ of a multi-set of points in objective space and of a multi-set of points in the decision space $\HVF$.
The approach of set-scalarization, as originated in~\cite{Emmerich07, EmmerichD12} for the gradient of the hypervolume indicator. The main results of the paper are as follows:
\begin{enumerate}
\item the hypervolume indicator of $\HVF$ can now be computed analytically not only for the bi-objective case as in \cite{Sosa-HernandezS20}, but also for more than two objective functions (Theorem~\ref{thm:3Dtrisweepsoundness}). 
\item the time complexity of computing all non-zero components of the 3-D hypervolume indicator $\HV$ for vectorized sets $\mathbf{Y}$ with $n$ points  in general position is in $\Theta(n \log n)$. (Theorem~\ref{thm:3Dtime})
\item The number of non-zero components of the Hessian matrix is at most $12n-6$. The space complexity of the Hessian matrix computation and the space required to store all components is in $O(n)$. (Theorem~\ref{thm:3Dsparsity}).
\item  it holds that $\partial \HVY /\partial y_k$ is always the hypervolume contribution of the projection of $y_k$ along axis $k$. (Theorem~\ref{thm:partial-derivative-hv}).
\item the analytical computation of the higher derivatives of the $m$-dimensional hypervolume indicator $\HV$ for $m>1$can be formulated by computing the gradient of the gradient, which can be essentially achieved by the recursive application of Theorem (Theorem~\ref{thm:partial-derivative-hv}) (computing the $m-2$-dimensional projection's contributions along the $y_k$ axis, of the $m-1$ dimensional projections' hypervolume contributions along the $y_k$); and taking special care of the role of the reference points and signs of non-zero components as detailed in Alg.~\ref{alg:general}. 
\end{enumerate}
Some interesting next steps would be to
\begin{enumerate}
\item investigate the rank of the Hypervolume Hessian matrix and its numerical stability of second-order methods that use the Hessian matrix of $\HVF$ (or its inverse) in their iteration, such as the Hypervolume Newton Method~\cite{sosa2014hypervolume, Sosa-HernandezS20}.
\item find (asymptotically) efficient algorithms for the computation of the higher-order derivative tensors and for more than three-dimensional cases. The latter might, however, find the asymptotical time complexity of the N-D Hessian matrix computation might turn out to be a difficult endeavor, as it is not even known what the asymptotical time complexity of $\HV$ is. What is more promising is to bound the number of the non-zero components in the Hessian matrix, which is related to the number of $n-2$ dimensional facets in the ortho-convex polyhedron that marks the measured region of $\HV$. It is conjectured that in the $m$-dimensional case, it also grows linearly in $n$ (the number of points in the approximation set) but exponentially in $m$ (the number of objectives). However, dimension sweep algorithms also yield high efficiency in the 4-D case and can probably be easily adapted~\cite{guerreiro2012fast}.
\end{enumerate}
More generally, it is remarked that besides the hypervolume indicator also, other measures have been proposed for the quality of Pareto front approximations, such as the inverted generational distance~\cite{ishibuchi:15} or the averaged Hausdorff distance~\cite{schuetze_scalar:16,uribe2020set}, with sometimes advantageous properties regarding the uniformity the point distributions in their maximum. Also, for those, the approach of vectorization of the input set is promising, and the analytical computation and subsequent analysis of the Hessian matrix can be an interesting approach.
The code for computing analytically the Hessian matrix of $\HV$ has been validated on example data and made available in a GitHub repository.\footnote{\url{https://github.com/wangronin/HypervolumeDerivatives}} The repository includes an implementation based on Alg.~\ref{alg:general} in \texttt{Python} and in \texttt{Mathematica}, as well as the data of the examples.

{\bf Remark}: The authors have been listed alphabetically in this paper, and all authors have contributed to the completion of the manuscript.\\
{\bf Author contributions}: HW + ME + AD: Concept and formulation of the general analytical expression for the matrix, ME: 3-D Dimension sweep algorithm and 3-D Complexity Analysis; AD+HW: Improved mathematical notation and numerical validation experiments; HW: theoretical analysis of the general N-dimensional Hessian matrix; All authors: General set-vectorization concepts, motivation, revision/editing of formulation of the paper. 

\bibliographystyle{alpha}
\bibliography{ref}

\end{document}